\documentclass{amsart}

\usepackage{amsfonts,amsmath,amsthm,amssymb,amscd,latexsym,enumerate,etoolbox,mathrsfs,comment}

\usepackage[all,cmtip,pdf]{xy}
\usepackage{thmtools, thm-restate}
\usepackage{tikz-cd}
\usepackage{tikz}
\usetikzlibrary{calc}

\newcommand{\R}{\mathbb{R}}
\newcommand{\C}{\mathbb{C}} 
\newcommand{\N}{\mathbb{N}}

\newcommand{\Z}{{\mathbb Z}}
\newcommand{\Q}{{\mathbb Q}}

\renewcommand{\phi}{\varphi}

\theoremstyle{plain}
    \newtheorem{theorem}{Theorem}[section]
    
    \newtheorem{corollary}[theorem]{Corollary}
    \newtheorem{proposition}[theorem]{Proposition}
    \newtheorem{conjecture}[theorem]{Conjecture}
\theoremstyle{definition}

    \newtheorem{remark}[theorem]{Remark}
    
\theoremstyle{remark}

\begin{document}

\title{A counterexample to the HK-conjecture that is principal}
\author{Robin J. Deeley}
\address{Robin J. Deeley,   Department of Mathematics,
University of Colorado Boulder
Campus Box 395,
Boulder, CO 80309-0395, USA }
\email{robin.deeley@colorado.edu}
\subjclass[2010]{46L80, 22A22}
\keywords{The HK-conjecture, groupoids, $K$-theory, homology}
\thanks{This work was partially supported by NSF Grant DMS 2000057.}

\begin{abstract}
Scarparo has constructed counterexamples to Matui's HK-conjecture. These counterexample and other known counterexamples are essentially principal but not principal. In the present paper, a counterexample to the HK-conjecture that is principal is given. Like Scarparo's original counterexample, our counterexample is the transformation groupoid associated to a particular odometer. However, the relevant group is the fundamental group of a flat manifold (and hence is torsion-free) and the associated odometer action is free. The examples discussed here do satisfy the rational version of the HK-conjecture.
\end{abstract}

\maketitle

\section*{Introduction}
Matui's HK-conjecture \cite{MR3552533} predicts a strong relationship between the homology and $K$-theory of an important class of groupoids (the precise statement is given below). There are counterexamples to this conjecture in the essentially principal case. The first counterexample is due to Scarparo \cite{Scarparo} and a stronger counterexample (due to Ortega and Scarparo) can be found in \cite{Scarparo2}. On the other hand, there have been a number of positive results, starting with Matui's original work \cite{MR3552533}, also see \cite{Rufus, MR4030921, MR4170644, PV1, MR4052913}. In particular, there has been quite a bit of success verifying the conjecture for particular classes of principal  (rather than essentially principal) groupoids, see in particular \cite[Corollary C]{Rufus} and \cite[Remark 3.5]{PV1}.

Nevertheless, the goal of this paper is the construction of a counterexample to Matui's HK-conjecture that is principal (rather than just essentially principal). It is worth noting that our examples do satisfy the rational version of the conjecture. 

I will now state the HK-conjecture and outline the construction of the counterexample. The reader unfamiliar with the various terms used below can see Section \ref{secPrelim} for precise definitions. The statement of the HK-conjecture is as follows:
\begin{conjecture}
Suppose that $\mathcal{G}$ is a second countable, \'etale, (essentially) principal, minimal, ample groupoid. Then
\[ K_*(C^*_r(\mathcal{G})) \cong \bigoplus_{i} H_{*+2i}(\mathcal{G}) \] 
where $K_*(C^*_r(\mathcal{G}))$ is the $K$-theory of the reduced groupoid $C^*$-algebra of $\mathcal{G}$ and $H_{*}(\mathcal{G})$ is the homology of $\mathcal{G}$.
\end{conjecture}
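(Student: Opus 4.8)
Since the displayed statement is a \emph{conjecture} that this paper sets out to refute in the principal setting, the object to produce is a counterexample rather than a proof, so I describe how I would construct and verify one. Following Scarparo's template, the plan is to realize the counterexample as a transformation groupoid $\G = \Gamma \ltimes X$ attached to an odometer. I would first take $\Gamma$ to be the fundamental group of a closed flat manifold $M$, i.e.\ a Bieberbach group: torsion-free, crystallographic, and—being virtually $\Z^n$—amenable and residually finite. Torsion-freeness is the decisive improvement over the existing counterexamples, since it is exactly what will allow the odometer action to be free, making $\G$ genuinely principal rather than merely essentially principal.

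Next I would fix a nested chain of finite-index normal subgroups $\Gamma = \Gamma_0 \supset \Gamma_1 \supset \cdots$ with $\bigcap_n \Gamma_n = \{e\}$ and set $X = \varprojlim \Gamma/\Gamma_n$, a Cantor set carrying the natural odometer $\Gamma$-action. Minimality of the action and the second-countable, \'etale, ample structure of $\G$ are automatic for such profinite completions; the delicate point is freeness, which I would extract by choosing the chain so that the $\Gamma_n$ separate the relevant conjugacy behaviour of elements, ensuring that no nontrivial $\gamma$ fixes a point of $X$.

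With the groupoid in place I would reduce both sides of the conjecture to invariants of the intermediate flat manifolds $M_n = \R^n/\Gamma_n$. On the homology side, the groupoid homology of a transformation groupoid computes $H_*(\Gamma; C(X,\Z))$, and since $C(X,\Z) = \varinjlim \Z[\Gamma/\Gamma_n]$, Shapiro's lemma yields $H_*(\G) \cong \varinjlim H_*(\Gamma_n;\Z) = \varinjlim H_*(M_n;\Z)$. On the $K$-theory side, Green's imprimitivity theorem identifies $C(\Gamma/\Gamma_n)\rtimes\Gamma$ with $M_{[\Gamma:\Gamma_n]}\bigl(C^*_r(\Gamma_n)\bigr)$, so that $C^*_r(\G) = C(X)\rtimes_r\Gamma = \varinjlim M_{[\Gamma:\Gamma_n]}\bigl(C^*_r(\Gamma_n)\bigr)$ and hence $K_*(C^*_r(\G)) \cong \varinjlim K_*(C^*_r(\Gamma_n))$. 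As each $\Gamma_n$ is amenable and therefore satisfies the Baum--Connes conjecture, and is moreover torsion-free, one gets $K_*(C^*_r(\Gamma_n)) \cong K_*(M_n)$, the topological $K$-homology of the flat manifold. The conjecture thus collapses to the concrete comparison
\[ \varinjlim K_*(M_n) \;\stackrel{?}{\cong}\; \bigoplus_i \varinjlim H_{*+2i}(M_n;\Z). \]

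Finally I would exhibit a specific Bieberbach group and chain for which these two limits disagree. Rationally the Chern character forces an isomorphism, so the rational HK-conjecture holds automatically and the discrepancy must live in torsion, arising from the passage between the homology and $K$-homology of $M$—a nonzero Atiyah--Hirzebruch differential (a secondary cohomology operation) or a nonsplit extension in that spectral sequence. The main obstacle, and the real content of the construction, is twofold: to choose $M$ so that such a torsion discrepancy is genuinely present, and then to choose the chain $(\Gamma_n)$ so that the discrepancy is not washed out in the inductive limit—for instance by taking the indices coprime to the relevant torsion so the offending classes survive. Establishing that the two limits are truly non-isomorphic, while simultaneously preserving freeness of the action, is where I expect the bulk of the effort to lie.
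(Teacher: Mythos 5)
Your overall architecture does match the paper's: an odometer over a Bieberbach group $\Gamma=\pi_1(Y)$, the identification $K_*(C^*_r(\G))\cong\varinjlim K_*(M_n)$ via Green imprimitivity plus Baum--Connes for the torsion-free amenable subgroups, $H_*(\G)\cong\varinjlim H_*(M_n)$ via Scarparo's computation, and a torsion discrepancy on a flat manifold detected through the Atiyah--Hirzebruch spectral sequence. But there is a genuine gap, and it sits exactly where you placed your structural shortcut: the requirement that the chain $(\Gamma_n)$ consist of \emph{normal} finite-index subgroups with $\bigcap_n\Gamma_n=\{e\}$. Write elements of $\Gamma$ as affine isometries $(A,v)$ of $\R^d$, with translation lattice $\Z^d$ and holonomy $F$. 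If $N\trianglelefteq\Gamma$ contains an element $(A,v)$ with $A\neq I$, then for every $m\in\Z^d$ the commutator $(I,m)(A,v)(I,m)^{-1}(A,v)^{-1}=(I,(I-A)m)$ lies in $N$, so $N\supseteq(I-A)\Z^d\neq 0$. Hence in a nested normal chain with trivial intersection the holonomy images of the $\Gamma_n$ in $F$ must eventually be trivial, i.e.\ $\Gamma_n\subseteq\Z^d$ for large $n$; the intermediate manifolds $M_n$ are then tori, and both $\varinjlim H_*(M_n)$ and $\varinjlim K_*(M_n)$ are torsion-free. Since all torsion in $H_*(Y)$ is $|F|$-torsion, the discrepancy you intend to preserve is necessarily washed out in the limit; conversely, if you keep the holonomy alive you lose trivial intersection and, for normal subgroups, freeness itself, because the stabilizer of every point of $X$ is exactly $\bigcap_n\Gamma_n$. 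So for nontrivial holonomy --- the only case in which the counterexample can exist --- your plan as stated is internally inconsistent: no choice of normal chain, coprime indices or otherwise, can rescue it.

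This is precisely why the paper takes the chain $\Gamma_n=g^n_*(\pi_1(Y))$ coming from an expanding endomorphism $g:Y\to Y$, which is typically \emph{not} normal (the paper cites van Limbeek on this point), and why freeness cannot be deduced from $\bigcap_n g^n_*(\pi_1(Y))=\{e\}$ but is instead proved geometrically: a loop fixing a point of the odometer lifts iteratively to loops of arclength bounded by $L/(C\lambda^k)$, hence eventually nullhomotopic, hence trivial in $\pi_1(Y)$ (Proposition \ref{actionFree}). The self-similar chain has two further payoffs your sketch needs but does not secure: every $\Gamma_n$ is isomorphic to $\Gamma$, so both inductive limits run over the \emph{fixed} groups $K_*(Y)$ and $H_*(Y)$ rather than over varying manifolds; and by Epstein--Shub one may choose $g$ covering multiplication by $|F|+1$ on the torus, so the covering index is $n=(|F|+1)^d\equiv 1\pmod{|F|}$, making transfer composed with the connecting map the identity on torsion, whence $T(H_*(\Omega\rtimes\pi_1(Y)))\cong T(H_*(Y))$ exactly and $|T(K_*(C^*_r(\Omega\rtimes\pi_1(Y))))|\le|T(K_*(Y))|$. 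Your ``indices coprime to the relevant torsion'' is the right instinct, but it is realized through this non-normal chain, not a normal one. Finally, the step you defer --- existence of $Y$ with $|T(K_*(Y))|<|\bigoplus_i T(H_{*+2i}(Y))|$ --- is the paper's Section \ref{secExistence}: a $9$-dimensional real Bott manifold with $w_1=0$ and $w_3(Y)^2\neq 0$, so that $d_3(W_3(Y))=(\beta\circ\mathrm{Sq}^2\circ r)(W_3(Y))\neq 0$ and the $d^3$-differential in $K$-homology is nonzero by the module structure over the cohomological spectral sequence.
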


Like Scarparo's counterexample \cite{Scarparo}, the counterexample in the present paper is obtained from an odometer, see \cite[Section 2]{Scarparo} and the references therein for more on odometer actions. Unlike in \cite{Scarparo} the relevant group is torsion-free. The starting point is a flat manifold, $Y$, with an expanding endomorphism $g: Y \rightarrow Y$ in the sense of Shub \cite{ShubExp}. By \cite[Proposition 3]{ShubExp}, $g$ is an $n$-fold covering map (for some $n\ge 2$) and one obtains a chain of finite index subgroups

\[
\pi_1(Y) \supset g_*(\pi_1(Y)) \supset g^2_*(\pi(Y)) \supset \cdots
\] 
where $\pi_1(Y)$ is the fundamental group of $Y$ and $g_*$ is the map induced by $g$. Associated to this chain of finite index subgroups is an odometer action. This is an action of $\pi_1(Y)$ on a Cantor set, $\Omega$. Furthermore, the action is minimal in general and is free in our case (see Proposition \ref{actionFree}). In particular, the transformation groupoid associated to this action (denoted by $\Omega \rtimes \pi_1(Y)$) satisfies the hypotheses of the HK-conjecture and because the action is free is in addition principal. 

Next, using results of Scarparo \cite[Section 2.2]{Scarparo} and the Baum--Connes conjecture, the $K$-theory of the reduced $C^*$-algebra of $\Omega \rtimes \pi_1(Y)$ is shown to be the inductive limit group associated to an inductive system of the form:
\[ K_*(Y) \rightarrow K_*(Y) \rightarrow K_*(Y) \rightarrow \ldots  \]
where $K_*(Y)$ is the $K$-homology of $Y$. Likewise, using \cite[Section 2.3]{Scarparo}, the homology of $\Omega \rtimes \pi_1(Y)$ is shown to be the inductive limit group associated to an inductive system of the form:
\[ H_*(Y) \rightarrow H_*(Y) \rightarrow H_*(Y) \rightarrow \ldots  \]
where $H_*(Y)$ is the homology of $Y$. Key to both these inductive limit results is the fact that $Y$ is a model for $B\pi_1(Y)$. 

Based on the structure of these inductive limits, the problem is reduced to constructing a flat manifold where the $K$-homology and homology are not isomorphic (see Theorem \ref{torsionNotIso} below), while at the same time controlling the maps in the inductive limits. 

In regard to the first of these requirements, the following is proved in Section \ref{secExistence}: 
\begin{theorem} \label{torsionNotIso}
For any $d\ge 9$, there exists a $d$-dimensional flat manifold $Y$ with the property that
\[ |T(K_*(Y))| < |\bigoplus_{i} T(H_{*+2i}(Y))| \]
 where, for a finitely generated abelian group $G$, $T(G)$ denotes its torsion subgroup and $|T(G)|$ denotes the number of elements in $T(G)$ (which is finite in our situation).
 \end{theorem}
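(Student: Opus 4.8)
The plan is to realize $Y$ as an aspherical manifold $B\Gamma$ for a Bieberbach group $\Gamma=\pi_1(Y)$, so that $H_*(Y)=H_*(\Gamma;\Z)$ while $K_*(Y)$ is its topological $K$-homology, and then to compare the two through the Atiyah--Hirzebruch spectral sequence (AHSS). I would use the homological AHSS $E^2_{p,q}=H_p(Y;K_q(\mathrm{pt}))\Rightarrow K_{p+q}(Y)$; since $K_q(\mathrm{pt})=\Z$ for $q$ even and $0$ for $q$ odd, each nonzero column is $H_p(Y)$, and because $Y$ is a closed $d$-manifold the induced filtration on $K_*(Y)$ is finite. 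The first goal is the universal upper bound
\[ |T(K_*(Y))| \le \prod_{p} |T(H_p(Y))| = \left|\bigoplus_i T(H_{*+2i}(Y))\right|. \]
The mechanism is that every differential $d_r$ vanishes rationally (the AHSS differentials are torsion operations), so each $d_r$ has finite image. Thus every group on page $r+1$ is a kernel modulo the finite image of the incoming differential: the kernel is a subgroup and so its torsion order is at most that of the original group, while quotienting by the finite (hence torsion) image lowers the order further by exactly the order of that image. The extension problem assembling $K_*(Y)$ from $E^\infty$ can likewise only lower the total torsion order, since for $0\to A\to K\to C\to 0$ one has $|T(K)|\le |T(A)|\,|T(C)|$. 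Hence the displayed inequality always holds, and it is \emph{strict} precisely when some differential (or some extension) acts nontrivially on torsion.

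This reduces the theorem to producing a single flat manifold that beats the collapse, together with a way to propagate it to every dimension $d\ge 9$. For the propagation I would take products with tori: if $Y_0$ is flat with $|T(K_*(Y_0))|<\prod_p |T(H_p(Y_0))|$, then $Y_0\times \T^k$ is flat of dimension $\dim Y_0 + k$, and since $K_*(\T^k)$ and $H_*(\T^k)$ are free abelian the Künneth theorems give
\[ |T(K_*(Y_0\times \T^k))| = |T(K_*(Y_0))|^{2^k}, \qquad \left|\bigoplus_i T(H_{*+2i}(Y_0\times \T^k))\right| = \left(\prod_p |T(H_p(Y_0))|\right)^{2^k}. \]
Raising a strict inequality of positive integers to the $2^k$ power preserves it, so a single base example of dimension $9$ yields examples in every dimension $d\ge 9$ by taking $k=d-9$.

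It remains to construct the base example, and this is where the real work lies. By the discussion above the strict inequality is forced by a nonvanishing differential, and the most accessible one is $d^3$, which is the homological form of the integral operation $\beta\circ\mathrm{Sq}^2\circ\rho$. I would therefore seek a Bieberbach group $\Gamma_0$ whose classifying space carries a mod-$2$ class $x$ with $\beta\,\mathrm{Sq}^2 x\ne 0$, forcing $d^3\ne 0$ and hence a strict drop in torsion from $H_*$ to $K_*$. Concretely I would try $\Gamma_0$ with elementary abelian $2$-group holonomy (of generalized Hantzsche--Wendt type), whose mod-$2$ cohomology and Steenrod action are accessible from the holonomy representation and the Wu formula, and then verify two things: that $\beta\,\mathrm{Sq}^2$ is nonzero on a suitable torsion class, and that its effect is not cancelled elsewhere, so that the product $\prod_p |T(H_p(Y_0))|$ genuinely exceeds $|T(K_*(Y_0))|$ rather than merely dominating it.

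The main obstacle is precisely this final step: exhibiting an explicit torsion-free crystallographic group in dimension $9$ with a \emph{verified} nonzero $\beta\,\mathrm{Sq}^2$ (equivalently a nonvanishing $d^3$), and controlling enough of the spectral sequence and the extension problem to certify the strict inequality instead of the automatic $\le$. I expect the dimension bound $d\ge 9$ to emerge as the smallest dimension in which a flat manifold can realize the required Steenrod pattern on a torsion class, with all lower-dimensional candidates either failing to produce the operation or having it wash out in the torsion count.
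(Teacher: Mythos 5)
Your overall framework is the same as the paper's: the AHSS with torsion-valued differentials gives the automatic inequality $|T(K_*(Y))|\le|\bigoplus_i T(H_{*+2i}(Y))|$, strictness is forced by a nonvanishing $d^3$ (the $\beta\circ\mathrm{Sq}^2\circ r$ operation), and your torus-stabilization step $Y_0\mapsto Y_0\times\T^{d-9}$ is correct and is in effect one of the paper's own options for $d>9$ (appending a zero column to the Bott matrix yields a trivial circle bundle). But the heart of the theorem is the existence of the dimension-$9$ base example, and your proposal explicitly leaves it unconstructed: you say you ``would try'' a Bieberbach group with elementary abelian $2$-holonomy and that the ``main obstacle is precisely this final step.'' That final step is the nontrivial content of the theorem, and deferring it means the statement is not proved. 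The paper carries it out concretely: since $\mathrm{Sq}^3=r\circ\beta\circ\mathrm{Sq}^2$ and $\mathrm{Sq}^3(w_3)=w_3^2$, the condition $w_3(Y)^2\neq 0$ forces $d_3(W_3(Y))=\beta\mathrm{Sq}^2(w_3(Y))\neq 0$; so everything reduces to exhibiting an orientable flat $9$-manifold with $w_3^2\neq 0$. The paper produces one as a real Bott manifold $Y(A)$ for an explicit $9\times 9$ binary matrix, using the Kamishima--Masuda presentation $H^*(Y(A);\Z/2\Z)\cong(\Z/2\Z)[x_1,\ldots,x_9]/(x_j^2=x_j\sum_i A_{ij}x_i)$ to check $w_1=0$ and, by tracking which monomials square to $x_1x_2x_4x_5x_6x_7$, that $w_3^2\neq 0$. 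Nothing of this kind --- a named group, a computed cohomology ring, a verified class --- appears in your proposal.

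A secondary gap: you assert that a class with $\beta\mathrm{Sq}^2\,x\neq 0$ in cohomology ``forces $d^3\neq 0$'' in the \emph{homological} AHSS, but that inference needs an argument, since the homological differential is a priori the dual operation. The paper bridges exactly this point using the module structure of the $K$-homology spectral sequence over the $K$-theory one together with Poincar\'e duality, via $d^3(x\cap[X])=d_3(x)\cap[X]\pm x\cap d^3([X])$ --- which is why orientability (the check $w_1(Y(A))=0$) is a load-bearing part of the construction, a condition your sketch never imposes. (One could instead pass through universal coefficients to compare $T(K^*)$ with $T(K_*)$ and $T(H^*)$ with $T(H_*)$, but some such bridge must be built explicitly.) Your bookkeeping for the upper bound and the K\"unneth propagation are fine; the missing example and this duality step are where the proposal falls short of a proof.
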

The construction of $Y$ satisfying Theorem \ref{torsionNotIso} relies on the theory of real Bott manifolds, see \cite{MR3273875, MR2576506} and references therein, and the Atiyah--Hirzebruch spectral sequence. Although not a direct application of \cite[Remark 6.12]{MR4030921}, our construction has a similar flavour. Then, using a result of Epstein and Shub \cite{EpSh} and the inductive limits discussed above, it is shown that for any flat manifold, there is an expanding endomorphism such that 
\[
 T(H_*(\Omega \rtimes \pi_1(Y))) \cong T(H_*(Y)).
\]
Combining this last equation with the fact that
\[ 
|T(K_*(C^*_r(\Omega \rtimes \pi_1(Y))))| \le |T(K_*(Y))|
\] 
and Theorem \ref{torsionNotIso} completes the construction of the counterexample. Finally, it is shown that every transformation groupoid associated to an odometer constructed from a flat manifold and expanding endomorphism (via the process discussed above) satisfies the rational version of the HK-conjecture, see Theorem \ref{mainResultQ} for the precise statement.

As reader might have noticed the construction involves quite a few ``moving parts". I would encourage the reader to familiarize themselves with \cite[Sections 2.1-2.3]{Scarparo} and \cite[Section 1]{ShubExp}. I have followed \cite[Sections 2 and 3]{MR3273875} when considering real Bott manifolds in Section \ref{secExistence}. In addition, some basic knowledge of flat manifolds and the Atiyah--Hirzebruch spectral sequence is also assumed, although I have explicitly listed the facts used.

In the next few paragraphs, future work is discussed. It is worth noting that the dimension of the flat manifold constructed in Section \ref{secExistence} is nine (or more) and that if a flat manifold satisfies the conclusion of Theorem \ref{torsionNotIso}, then its dimension must be greater than or equal to four. A systematic study of flat manifolds as in Theorem \ref{torsionNotIso} would be an interesting future project, especially in light of the positive results for low dimensional examples (see in particular, \cite[Corollary C]{Rufus} and \cite[Section 3]{PV1}). In particular, one can show that the dynamic asymptotic dimension of an odometer associated to a flat manifold and expanding endomorphism is the dimension of the manifold. As such, for any $d\ge 9$, we have a counterexample with dynamic asymptotic dimension equal to $d$. It would be interesting to obtain examples with smaller dynamic asymptotic dimension.

Based on the positive result of \cite[Corollary C]{Rufus}, one might ask if the following conjecture holds:
\begin{conjecture} \label{conjectureNew}
Suppose that $\mathcal{G}$ is a second countable, \'etale, principal, minimal, ample groupoid. Then there exists a (possibly different) groupoid $\tilde{\mathcal{G}}$ that is second countable, \'etale, principal, minimal, and ample such that 
\[ K_*(C^*_r(\mathcal{G})) \cong K_*(C^*_r(\tilde{\mathcal{G}})) \] 
and the HK-conjecture holds for $\tilde{\mathcal{G}}$.
\end{conjecture}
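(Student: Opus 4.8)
The plan is to prove Conjecture \ref{conjectureNew} by combining a classification result for the $C^*$-algebra $C^*_r(\G)$ with a range-of-the-invariant theorem for a subclass of groupoids on which the HK-conjecture is already known to hold. First I would record that, under the standing hypotheses, $C^*_r(\G)$ is a separable, simple, nuclear $C^*$-algebra satisfying the UCT: nuclearity and the UCT follow from amenability of ample groupoids (Tu), while simplicity follows from minimality together with principality. Assuming in addition that $C^*_r(\G)$ is $\mathcal{Z}$-stable (which holds, for instance, whenever the dynamic asymptotic dimension is finite, forcing finite nuclear dimension), the Elliott classification program guarantees that $C^*_r(\G)$ is determined up to isomorphism by its Elliott invariant, and in particular that $K_*(C^*_r(\G))$ is its $K$-theoretic part. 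The conjecture then reduces to a realization problem: given the pair of countable abelian groups $(K_0(C^*_r(\G)), K_1(C^*_r(\G)))$, construct a second countable, \'etale, principal, minimal, ample groupoid $\tilde{\G}$ satisfying the HK-conjecture and having the same $K$-theory.

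To attack this realization problem I would work inside the class of transformation groupoids $\Omega \rtimes \pi_1(Y)$ studied in this paper, but now choose the flat manifold $Y$ so that the Atiyah--Hirzebruch spectral sequence degenerates, i.e. so that $T(K_*(Y)) \cong \bigoplus_i T(H_{*+2i}(Y))$ (the reverse of the situation engineered in Theorem \ref{torsionNotIso}). For such $Y$ the inductive-limit descriptions of $K_*(C^*_r(\Omega \rtimes \pi_1(Y)))$ and of $\bigoplus_i H_{*+2i}(\Omega \rtimes \pi_1(Y))$ recalled in the Introduction coincide, so the integral HK-conjecture holds for $\tilde{\G} = \Omega \rtimes \pi_1(Y)$; since the rational version always holds by Theorem \ref{mainResultQ}, only the torsion has to be matched. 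I would then realize a prescribed $K$-theory using such odometers together with AF building blocks (which realize torsion-free $K_0$ with $K_1 = 0$ and trivially satisfy HK, cf. \cite{Rufus, PV1}) and tori $Y = T^d$ (whose spectral sequence collapses and which contribute nonzero $K_1$), combining blocks via external products $\tilde{\G}_1 \times \tilde{\G}_2$, for which both homology and $K$-theory obey Künneth formulas and for which HK is inherited whenever the relevant $\mathrm{Tor}$ and extension terms vanish.

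The hard part will be the range-of-the-invariant statement itself: showing that every $K$-theory pair occurring for a groupoid satisfying the HK-hypotheses also occurs for one in this HK-good subclass, while simultaneously preserving principality and minimality. Two difficulties dominate. First, one must realize arbitrary countable torsion in prescribed degrees using flat manifolds with collapsing spectral sequence, and arrange the induced maps in the odometer inductive limit so as to reach a prescribed limit group; this is the delicate calculus of Section \ref{secExistence} run in reverse, and it is not clear every target is attainable. Second, the product construction preserves both minimality and HK only under strong transversality and Künneth-vanishing conditions, so combining blocks with torsion in overlapping degrees risks reintroducing exactly the spectral-sequence differential that produced the counterexample, and the product of minimal systems is rarely minimal. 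I expect the genuine obstacle to lie here, and suspect that a complete solution would require either a clean ``range of the Elliott invariant within the HK-verified groupoid class'' or a soft absorption argument producing $\tilde{\G}$ abstractly rather than the explicit dynamical models sketched above.
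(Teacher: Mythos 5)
You have attempted to prove a statement that the paper does not prove: Conjecture \ref{conjectureNew} is explicitly left open, and the paper offers only a one-sentence suggestion of strategy, namely to study the range of the $K$-theory of groupoids already known to satisfy the HK-conjecture (e.g.\ via \cite[Corollary C]{Rufus} or generalizations). Your sketch is aligned in spirit with that suggestion, but by your own admission it is a research program with its central step --- the realization (range-of-the-invariant) problem --- unsolved. So there is no question of it being accepted as a proof; the question is only whether the program is sound, and there it has concrete problems beyond the one you flag.

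First, the classification detour is both unjustified and unnecessary. $\mathcal{Z}$-stability (or finite dynamic asymptotic dimension, hence finite nuclear dimension) is \emph{not} implied by the hypotheses of the conjecture --- indeed the paper notes its own counterexamples have dynamic asymptotic dimension $d \ge 9$, and nothing in the class ``second countable, \'etale, principal, minimal, ample'' forces classifiability --- so you have silently strengthened the hypotheses. More to the point, the conjecture only asks for an abstract isomorphism $K_*(C^*_r(\mathcal{G})) \cong K_*(C^*_r(\tilde{\mathcal{G}}))$, not an isomorphism of $C^*$-algebras, so Elliott classification buys nothing: the entire content is the range problem. Second, your proposed building blocks cannot exhaust the range. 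By Theorem \ref{inductiveLimitKtheory}, $K_*(C(\Omega)\rtimes\pi_1(Y))$ is an inductive limit of the fixed finitely generated group $K_*(Y)$ along rationally invertible maps, so for instance $|T(K_*(C(\Omega)\rtimes\pi_1(Y)))| \le |T(K_*(Y))|$ by Proposition \ref{inductiveLimitGenPropTorsionIncluded}; groupoids in the conjecture's class can have $K$-theory far outside this form (e.g.\ with infinite torsion subgroups), and AF blocks and tori do not repair this. Third, arranging $T(K_*(Y)) \cong \bigoplus_i T(H_{*+2i}(Y))$ does not by itself give the \emph{integral} HK-conjecture for $\Omega\rtimes\pi_1(Y)$: the two inductive limits of Theorems \ref{inductiveLimitHomology} and \ref{inductiveLimitKtheory} have a priori unrelated connecting maps $\hat{g}_i$ and $\tilde{g}_i$, there is no integral Chern character to intertwine them, and even on the homology side the paper must choose a special Epstein--Shub endomorphism (Theorem \ref{mainResultTorsion}) to control torsion in the limit. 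Finally, one point where you are too pessimistic: the product of two minimal \'etale groupoids \emph{is} minimal, since orbits in $\mathcal{G}_1\times\mathcal{G}_2$ are products of orbits and closures of products are products of closures; the genuine obstruction in the product construction is only the K\"unneth $\mathrm{Tor}$ terms, not minimality.
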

One approach to this conjecture would be to study the range of the $K$-theory of groupoids satisfying the HK-conjecture (e.g., by satisfying the hypotheses of \cite[Corollary C]{Rufus} or ideally generalizations of it). As stated Conjecture \ref{conjectureNew} would not be useful for computations. However, one could hope that there is an explicit construction of $\tilde{\mathcal{G}}$ from $\mathcal{G}$ that would facilitate computations.

Although this paper makes no reference to Smale spaces. I would like to mention that there is a connection between the unstable relation of a Smale space with totally disconnected stable sets and odometer actions, see \cite[page 194]{Put} for a specific case. In future work, this connection will be explored in detail. For now, it seems appropriated to mention that the counterexample in the present paper can be used to show that there is a counterexample to the HK-conjecture in the class of groupoids obtained from the unstable relation of Smale spaces with totally disconnected stable sets. This is of interest in light of recent results of Proietti and Yamashita \cite{PV2} connecting the homology of \'etale groupoids to Putnam's homology theory for Smale spaces \cite{PutHom}.

\section*{Acknowledgments}

I would like to thank Cl\'ement Dell'Aiera for bringing this problem to my attention at a talk he gave at the Great Plains Operator Theory Symposium (GPOTS) 2021. The organizers of GPOTS 2021 (Greg Knese, John McCarthy, Yanli Song, Xiang Tang, and Brett Wick) are also to be thanked for their organization of a wonderful conference during these these difficult times of lockdowns and quarantine. I would like to thank Rachel Chaiser, Ian Putnam, and Andrew Stocker for discussions surrounding an expanding endomorphism of the Klein bottle that was useful in developing the ideas of the present paper. While preparing this paper, I benefited from questions on MathOverFlow asked by Nicolas Boerger, Daniel Ramras, and Efton Park. The answer to Nicolas Boerger's question given by Benjamin Antieau was particularly useful. I thank Magnus Goffeng for various helpful discussions. Finally, I would like to the thank the referee for noticing an issue in the original formulation of Theorem \ref{inductiveLimitKtheory} and a number of other useful comments.

\section{Preliminaries} \label{secPrelim}
\subsection{Groupoids}
Let $\mathcal{G}$ be a groupoid. Its unit space is denoted by $\mathcal{G}^{(0)}$ and its range and source maps by $r, s : \mathcal{G} \to \mathcal{G}^{(0)}$. The ordered pair $g, h \in \mathcal{G}$ is composable if $s(g) = r(h)$ and their composition is denoted $gh$.  The inverse of $g \in \mathcal{G}$ is denoted $g^{-1}$. In this paper, all groupoids will be locally compact, Hausdorff, second countable, with compact unit space. Moreover, all groupoids in the paper will be \'etale, meaning that $r$ and $s$ are local homeomorphisms. In this case $\mathcal{G}^{(0)}$ is an open subset of $\mathcal{G}$ and the Haar system is given by counting measures. We say that $\mathcal{G}$ is principal if for each $x \in \mathcal{G}^{(0)}$ the isotropy group
\[ \mathcal{G}^x_x := \{ g \in \mathcal{G} \mid s(g) = r(g) = x \} \] 
is trivial (i.e., equal to $\{ x \}$). A groupoid, $\mathcal{G}$, is essentially principal if the interior of the set $\{ g \in \mathcal{G} \mid s(g)=r(g) \}$ is $\mathcal{G}^0$. Notice that principal implies essentially principal, but the converse is false. A groupoid is ample if its unit space is totally disconnected (e.g., the Cantor set).

To a groupoid satisfying the assumptions above, $\mathcal{G}$, one can associate its reduced groupoid $C^*$-algebras. The resulting $C^*$-algebra is denoted by $C^*_r(\mathcal{G})$. The computation of the $K$-theory of $C^*_r(\mathcal{G})$ is an important problem in $C^*$-algebra theory. 

The homology of $\mathcal{G}$ was defined in \cite{MR1752294} and will be denoted by $H_*(\mathcal{G})$. Some basic facts of this theory are the following:
\begin{enumerate}
\item If $X$ is a finite CW-complex with the trivial groupoid structure, then the groupoid homology is isomorphic to the standard cohomology of $X$. 
\item If $G$ is a group, then $H_*(G)$ is isomorphic to the group homology of $G$. Hence if $BG$ is the classifying space of $G$, then $H_*(G) \cong H_*(BG)$.
\end{enumerate}
In the present paper, only the second item and some results in \cite{Scarparo} relating groupoid homology to classical homology will be needed. As such, a detailed introduction to groupoid homology is not needed.

With this notation introduced, Matui's HK-conjecture \cite{MR3552533} for principal groupoids is the following:
\begin{conjecture}
Suppose that $\mathcal{G}$ is a second countable, \'etale, principal, minimal, ample groupoid. Then
\[ K_*(C^*_r(\mathcal{G})) \cong \bigoplus_{i} H_{*+2i}(\mathcal{G}) \] 
\end{conjecture}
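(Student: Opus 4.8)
The plan is not to prove this conjecture but to \emph{refute} it: I would construct an explicit principal, minimal, \'etale, ample groupoid for which the two sides of the displayed isomorphism disagree. Following the blueprint of Scarparo \cite{Scarparo}, but replacing his torsion group by a torsion-free one, I would begin with a $d$-dimensional flat manifold $Y$ (so that $Y$ is a model for $B\pi_1(Y)$ and $\pi_1(Y)$ is torsion-free) equipped with an expanding endomorphism $g\colon Y\to Y$ in the sense of Shub. The induced chain of finite-index subgroups $\pi_1(Y)\supset g_*\pi_1(Y)\supset g^2_*\pi_1(Y)\supset\cdots$ yields an odometer action of $\pi_1(Y)$ on a Cantor set $\Omega$, and the candidate counterexample is the transformation groupoid $\mathcal{G}=\Omega\rtimes\pi_1(Y)$. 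The first step is to verify that this groupoid meets every hypothesis of the conjecture; the only nonformal point is freeness of the action, which upgrades ``essentially principal'' to ``principal'' (Proposition \ref{actionFree}).

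The heart of the argument is to compute both invariants as inductive limits over the covering chain. On the $K$-theory side I would invoke the Baum--Connes conjecture (valid here since $\pi_1(Y)$ is amenable and torsion-free) together with Scarparo's analysis \cite[Section 2.2]{Scarparo} to identify $K_*(C^*_r(\mathcal{G}))$ with the inductive limit of a system $K_*(Y)\to K_*(Y)\to\cdots$ in $K$-homology, the connecting maps being induced by $g$ (this is the content of Theorem \ref{inductiveLimitKtheory}). In parallel, \cite[Section 2.3]{Scarparo} identifies $H_*(\mathcal{G})$ with the inductive limit of $H_*(Y)\to H_*(Y)\to\cdots$. Both identifications rest on $Y$ being a classifying space for $\pi_1(Y)$. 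At this point the problem becomes purely topological: I must exhibit a flat manifold whose integral $K$-homology and homology carry genuinely different amounts of torsion, and this is precisely Theorem \ref{torsionNotIso}, which I would establish using real Bott manifolds and the Atiyah--Hirzebruch spectral sequence, comparing $|T(K_*(Y))|$ with $|\bigoplus_i T(H_{*+2i}(Y))|$.

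The delicate step---and the one I expect to be the main obstacle---is passing the torsion discrepancy from $Y$ to the inductive limit. Torsion is fragile under direct limits: a torsion class can become divisible and die, so a mismatch at the level of $Y$ need not survive to the limit. To control this I would use the result of Epstein and Shub \cite{EpSh} to arrange the expanding endomorphism so that the connecting maps on homology preserve torsion, giving $T(H_*(\mathcal{G}))\cong T(H_*(Y))$, while the general estimate $|T(K_*(C^*_r(\mathcal{G})))|\le |T(K_*(Y))|$ bounds the $K$-theory torsion from above. Combined with the strict inequality of Theorem \ref{torsionNotIso}, this forces $|T(K_*(C^*_r(\mathcal{G})))|<|\bigoplus_i T(H_{*+2i}(\mathcal{G}))|$, so the two groups cannot be isomorphic and the conjecture fails for the principal groupoid $\mathcal{G}$. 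Finally, I would check that this failure is confined to torsion: after tensoring with $\Q$ the two inductive systems agree, so $\mathcal{G}$ still satisfies the rational HK-conjecture, which I would record as in Theorem \ref{mainResultQ}.
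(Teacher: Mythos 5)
Your proposal is correct and follows essentially the same route as the paper: the statement is refuted by the same counterexample, namely the transformation groupoid of a free odometer action of $\pi_1(Y)$ for a flat manifold $Y$ with expanding endomorphism, with $K$-theory and groupoid homology computed as inductive limits over $K_*(Y)$ and $H_*(Y)$ via Scarparo's results and Baum--Connes, the torsion discrepancy supplied by a real Bott manifold through the Atiyah--Hirzebruch spectral sequence (Theorem \ref{torsionNotIso}), and the Epstein--Shub endomorphism used exactly as you describe to make the torsion survive the limit. You even correctly identify the delicate step (torsion fragility in inductive limits) and the rational HK statement, so nothing further is needed.
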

The rational version of this conjecture (again in the principal case) is the following:
\begin{conjecture}
Suppose that $\mathcal{G}$ is a second countable, \'etale, principal, minimal, ample groupoid. Then
\[ K_*(C^*_r(\mathcal{G})) \otimes \Q \cong \bigoplus_{i} H_{*+2i}(\mathcal{G})\otimes \Q \] 
\end{conjecture}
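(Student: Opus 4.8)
The plan is to deduce the rational isomorphism from a comparison between the groupoid homology of \cite{MR1752294} and $K$-theory that is mediated by the Baum--Connes assembly map together with an Atiyah--Hirzebruch-type spectral sequence. Because $\G$ is principal, the action of $\G$ on its unit space is free, so the classifying space for proper actions is a genuinely free model and the relevant equivariant $K$-homology is computed by an honest homotopy quotient; in particular, the local contributions of the (trivial) isotropy groups collapse. The first step is therefore to record, under the standing hypotheses, that $K_*(C^*_r(\G))$ is the target of an assembly map whose source is the $\G$-equivariant $K$-homology attached to the free $\G$-action. When this map is an isomorphism --- for instance in the presence of amenability or the Haagerup property --- one has identified the analytic invariant $K_*(C^*_r(\G))$ with a purely homological object.

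The second step is to feed this object into a spectral sequence of the form
\[ E^2_{p,q} = H_p(\G) \otimes K_q(\mathrm{pt}) \Longrightarrow K_{p+q}(C^*_r(\G)), \]
whose existence for ample groupoids, with the homology of \cite{MR1752294} on the $E^2$-page, has been established in the literature building on the Baum--Connes machinery; see \cite{PV2}. Principality is exactly what guarantees that no isotropy terms intervene, so the $E^2$-page is the clean tensor product above. Rationalizing, the coefficient groups simplify, since $K_q(\mathrm{pt}) \otimes \Q$ is $\Q$ for $q$ even and $0$ for $q$ odd, giving
\[ E^2_{p,q} \otimes \Q \cong \begin{cases} H_p(\G) \otimes \Q, & q \text{ even}, \\ 0, & q \text{ odd}. \end{cases} \]
The even-page differentials vanish for parity reasons, so the assertion reduces to the rational vanishing of the remaining odd differentials $d^3, d^5, \dots$.

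The third and decisive step is this rational degeneration. Here I would import the classical argument for the topological Atiyah--Hirzebruch spectral sequence, in which rational degeneration is a formal consequence of a Chern character inducing a natural, filtered isomorphism between rationalized $K$-theory and rationalized (co)homology. Concretely, the plan is to construct, or to invoke, a groupoid Chern character $\ch$ landing in $\bigoplus_i H_{*+2i}(\G) \otimes \Q$ and compatible with the filtration defining the spectral sequence. Compatibility with the filtration then identifies the associated graded of $K_*(C^*_r(\G)) \otimes \Q$ with $\bigoplus_i H_{*+2i}(\G) \otimes \Q$, which is precisely the claim, and simultaneously forces every higher differential to be torsion.

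The hard part will be twofold. First, the Baum--Connes input is not available for an arbitrary second countable, \'etale, principal, minimal, ample groupoid: without an amenability- or Haagerup-type hypothesis the assembly map is not known to be an isomorphism, and then even the existence of the spectral sequence above is in doubt. Second, granting the spectral sequence, the rational degeneration cannot be taken for granted; one must produce a genuinely natural groupoid-level Chern character and verify its compatibility with the homological filtration, which would require developing the delocalized equivariant Chern character technology (in the spirit of Tu--Xu) in sufficient generality. It is exactly these two gaps that prevent the argument from settling the statement outright, and that explain why the rational isomorphism --- rather than the integral HK-conjecture --- is what one expects to survive at this level of generality; the present paper's counterexample shows that the integral version indeed fails.
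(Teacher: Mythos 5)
The statement you were asked about is stated in the paper as a \emph{conjecture}: the paper neither proves it nor claims a proof, and to my knowledge it remains open in the stated generality. So the first thing to say is that your text is, as you yourself concede in its final paragraph, not a proof but a strategy sketch with two unfilled gaps, and those gaps are exactly where the mathematical content lies. Concretely: (i) for an arbitrary second countable, \'etale, principal, minimal, ample groupoid $\mathcal{G}$ there is no known assembly isomorphism --- principality gives trivial isotropy but nothing like amenability, a-T-menability, or the strong Baum--Connes property, and the spectral sequence with $E^2_{p,q}=H_p(\mathcal{G})\otimes K_q(\mathrm{pt})$ (as in Proietti--Yamashita) is only constructed under such hypotheses, so even its existence is conjectural here; (ii) granting the spectral sequence, the rational collapse requires a groupoid-level Chern character compatible with the filtration, which has not been constructed in this generality; and even then one must still handle convergence ($\lim^1$-type issues) when the homology is nonvanishing in infinitely many degrees, a point your sketch does not address. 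Since the conjecture quantifies over \emph{all} such groupoids, an argument conditional on Baum--Connes-type input cannot settle it.

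For comparison with what the paper actually does: the only proof in the paper adjacent to this statement is Theorem \ref{mainResultQ}, which establishes the rational HK-conjecture \emph{for the specific class of examples constructed there} (transformation groupoids of odometers from expanding endomorphisms of flat manifolds). The route is far more concrete than yours: Theorems \ref{inductiveLimitKtheory} and \ref{inductiveLimitHomology} identify $K_*(C^*_r(\Omega\rtimes\pi_1(Y)))$ and $H_*(\Omega\rtimes\pi_1(Y))$ with inductive limits of copies of $K_*(Y)$ and $H_*(Y)$ whose connecting maps are rational isomorphisms (because each admits a one-sided inverse up to multiplication by $n$), and then the classical Chern character for the compact manifold $Y$ --- no groupoid Chern character needed --- gives the rational comparison. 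Your plan, where it overlaps (Baum--Connes for the amenable group $\pi_1(Y)$, rationalized Chern character), is in the same spirit as the paper's special-case argument, but elevating it to the full conjecture is precisely the open problem, not a proof.
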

We will provide a counterexample to the first of these conjectures, but our examples satisfying the weaker rational HK-conjecture. If one weakens the assumption of being principal in these two conjectures to being essentially principal (as in Matui's original formulation \cite{MR3552533}) then there are counterexamples to both, see \cite{Scarparo}. 

It is worth noting that all groupoids in the present paper are amenable.

\subsection{Group theory considerations}

Let $G$ be an abelian group. The torsion subgroup of $G$ is denoted by $T(G)$. When $G$ is finitely generated, $T(G)$ is finite. The number of elements in a finite group, $F$, is denoted by $|F|$. Given an inductive systems of groups of the form,
\[ G \xrightarrow{\beta_0} G \xrightarrow{\beta_1} G \xrightarrow{\beta_2} \cdots  \]
the associated inductive limit group is denoted by $\lim_{\rightarrow} (G,  \beta_i)$. An element is denoted by $[ \gamma, k ]$ where $\gamma \in G$ and $k\in \N$, see \cite{PutHom} for more on inductive limits and this notation.

The next few results are certainly known, but it is useful to have them recorded for later use. 
\begin{proposition}
Suppose $G$ is a finitely generated abelian group, $n$ is an integer greater than or equal to one, and $\alpha: G\rightarrow G$ and $\beta: G \rightarrow G$ are group homomorphisms satisfying $\alpha \circ \beta = $ multiplication by $n$. If for each $\gamma \in T(G)$, $n \gamma =\gamma$ then $\beta|_{T(G)} : T(G) \rightarrow T(G)$ is an isomorphism.
\end{proposition}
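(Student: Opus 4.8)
The plan is to reduce the entire statement to the elementary fact that an injective endomorphism of a finite set is automatically bijective. First I would verify that $\beta$ genuinely restricts to an endomorphism of $T(G)$: if $\gamma \in T(G)$ satisfies $m\gamma = 0$ for some integer $m \geq 1$, then $m\beta(\gamma) = \beta(m\gamma) = 0$, so $\beta(\gamma) \in T(G)$; the same computation shows that $\alpha$ preserves $T(G)$ as well. Since $G$ is finitely generated abelian, $T(G)$ is a finite group, and this finiteness is what will ultimately drive the argument.

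Next I would exploit the hypothesis that $n\gamma = \gamma$ for every $\gamma \in T(G)$, which says precisely that multiplication by $n$ acts as the identity on $T(G)$. Combined with the relation $\alpha \circ \beta = $ multiplication by $n$, restricting to the torsion subgroup yields $(\alpha|_{T(G)}) \circ (\beta|_{T(G)}) = \id_{T(G)}$. In particular $\beta|_{T(G)}$ admits a left inverse, so it is injective: if $\beta(\gamma) = 0$ for some $\gamma \in T(G)$, then $\gamma = \alpha(\beta(\gamma)) = \alpha(0) = 0$.

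Finally, since $T(G)$ is finite, any injective self-map of $T(G)$ is surjective, so $\beta|_{T(G)}$ is a bijective group homomorphism and hence an isomorphism. The argument is short, and the only point requiring care is the order in which the hypotheses are used: the condition $n\gamma = \gamma$ must be invoked to collapse multiplication by $n$ to the identity on $T(G)$ before the finiteness of $T(G)$ can be used to upgrade injectivity to bijectivity. I do not anticipate any serious obstacle here; the essential content is simply that a one-sided inverse of an endomorphism of a finite abelian group is automatically a two-sided inverse.
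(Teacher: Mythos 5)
Your proof is correct and follows essentially the same route as the paper's: restrict the identity $\alpha \circ \beta = $ multiplication by $n$ to $T(G)$, use $n\gamma = \gamma$ to get a left inverse and hence injectivity, then invoke finiteness of $T(G)$ for surjectivity. Your only addition is the explicit (and worthwhile) check that $\beta$ and $\alpha$ preserve $T(G)$, which the paper leaves implicit.
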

\begin{proof}
By assumption, for any $\gamma\in T(G)$, $(\alpha \circ \beta)(\gamma)=n \gamma = \gamma$. Hence, $\beta|_{T(G)}$ is injective. But $T(G)$ is finite (since $G$ is finitely generated abelian) so $\beta|_{T(G)}$ is also surjective.
\end{proof}

\begin{proposition} \label{inductiveLimitGenProp}
Suppose $G$ is a finitely generated abelian group, $n$ is an integer greater than or equal to one, and for each $i$, $\alpha_i: G\rightarrow G$ and $\beta_i: G \rightarrow G$ are group homomorphisms satisfying $\alpha_i \circ \beta_i = $ multiplication by $n$. If for each $\gamma \in T(G)$, $n \gamma =\gamma$, then the map $\Phi : T(G) \rightarrow T(\lim_{\rightarrow} (G, \beta_i))$ defined via
\[ \gamma \mapsto [\gamma, 0] \]
is an isomorphism.
\end{proposition}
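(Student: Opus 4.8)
The plan is to leverage the preceding proposition, which (applied with $\alpha = \alpha_i$ and $\beta = \beta_i$) shows that each $\beta_i$ restricts to an isomorphism $\beta_i|_{T(G)} : T(G) \to T(G)$. Consequently, for any $k \le l$ the finite composition $\beta_{l-1} \circ \cdots \circ \beta_k$ also restricts to an isomorphism of $T(G)$. Nearly everything will follow from this single observation together with the standard description of the inductive limit $\lim_{\rightarrow}(G, \beta_i)$: for $k \le l$ one has $[\gamma, k] = [\beta_{l-1}\circ\cdots\circ\beta_k(\gamma), l]$, and $[\gamma, k] = 0$ precisely when $\beta_{l-1}\circ\cdots\circ\beta_k(\gamma) = 0$ for some $l \ge k$.

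First I would check that $\Phi$ is a well-defined homomorphism whose image lies in the torsion subgroup. Since $\gamma \in T(G)$ satisfies $m\gamma = 0$ for some integer $m \ge 1$, we obtain $m[\gamma, 0] = [m\gamma, 0] = 0$, so indeed $[\gamma, 0] \in T(\lim_{\rightarrow}(G, \beta_i))$; additivity is immediate from the group operation on the inductive limit. For injectivity, suppose $\gamma \in T(G)$ with $\Phi(\gamma) = [\gamma, 0] = 0$. Then $\beta_{l-1}\circ\cdots\circ\beta_0(\gamma) = 0$ for some $l$, and since this composition restricts to an injective map on $T(G)$ while $\gamma \in T(G)$, we conclude $\gamma = 0$.

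The main work, and the step I expect to be the crux, is surjectivity: showing every torsion element of the inductive limit admits a representative in $T(G)$ at level $0$. Given a torsion class, write it as $[\delta, k]$ with $m[\delta, k] = 0$ for some $m \ge 1$. Then $[m\delta, k] = 0$, so there is $l \ge k$ with $m \cdot (\beta_{l-1}\circ\cdots\circ\beta_k(\delta)) = 0$; hence $\gamma' := \beta_{l-1}\circ\cdots\circ\beta_k(\delta)$ lies in $T(G)$ and $[\delta, k] = [\gamma', l]$. Because $\beta_{l-1}\circ\cdots\circ\beta_0$ restricts to an isomorphism of $T(G)$, there is $\gamma \in T(G)$ with $\beta_{l-1}\circ\cdots\circ\beta_0(\gamma) = \gamma'$, and then $\Phi(\gamma) = [\gamma, 0] = [\gamma', l] = [\delta, k]$.

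The only genuine subtlety lies in recognizing that a torsion element of $\lim_{\rightarrow}(G, \beta_i)$, though a priori represented by an arbitrary $\delta \in G$ at some level $k$, can always be advanced along the system to a level $l$ where its representative $\gamma'$ is itself torsion in $G$; once this is in place, invertibility of the restricted compositions $\beta_{l-1}\circ\cdots\circ\beta_0$ on $T(G)$ furnished by the preceding proposition transports it back to level $0$. I do not anticipate any analytic or combinatorial difficulty beyond these bookkeeping steps, since all the structural input is already packaged in the isomorphism statement for each $\beta_i|_{T(G)}$.
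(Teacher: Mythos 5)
Your proof is correct, and its overall architecture matches the paper's: both arguments funnel everything through the preceding proposition (each $\beta_i|_{T(G)}$ is an isomorphism) and recover a preimage at level $0$ by inverting the restricted compositions. The one genuine divergence is in how you produce a torsion representative for a torsion class of the limit. The paper works with the given representative $\tilde{\gamma}$ at level $k$: from $k_1(\beta_{k_2}\circ\cdots\circ\beta_{k+1})(\tilde{\gamma})=0$ it applies the left inverses $\alpha_{k+1},\ldots,\alpha_{k_2}$ to get $k_1 n^{k_2}\tilde{\gamma}=0$, concluding that $\tilde{\gamma}$ itself already lies in $T(G)$. You instead advance the representative: from $m[\delta,k]=0$ you pass to a later level $l$ where $\gamma'=\beta_{l-1}\circ\cdots\circ\beta_k(\delta)$ is annihilated by $m$, so $\gamma'\in T(G)$ without ever invoking the $\alpha_i$ at this step. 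Your variant is slightly weaker in its conclusion (the original representative need not be shown torsion, only a pushed-forward one) but more elementary and more general: it uses nothing about the maps beyond the inductive-limit relations, and it is in fact the argument one needs for Proposition \ref{inductiveLimitGenPropTorsionIncluded}, where no $\alpha_i$ exist and the paper's phrase ``as in the proof of the previous result'' implicitly relies on exactly the advancing trick you spelled out. The paper's version, in exchange, exploits the splitting hypothesis to keep the level fixed, which makes the final identification $[\gamma,0]=[\tilde{\gamma},k]$ marginally more direct. Both routes are complete; your bookkeeping of the relations $[\gamma,k]=[\beta_{l-1}\circ\cdots\circ\beta_k(\gamma),l]$ and the criterion for $[\gamma,k]=0$ is accurate throughout.
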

\begin{proof}
It is clear that $\Phi$ is a group homomorphism. That $\Phi$ is injective follows because each $\beta_i$ is injective. 

To show that $\Phi$ is onto, let $[\tilde{\gamma}, k] \in T(\lim_{\rightarrow} (G, \beta))$. By the definition of the inductive limit group, 
\[ k_1 (\beta_{k_2} \circ \beta_{k_2-1} \circ \cdots \circ \beta_{k+1})(\tilde{\gamma})=0 \] 
for some $k_1, k_2 \in \N$. Applying $\alpha_{k+1} \circ \cdots \alpha_{k_2-1} \circ \alpha_{k_2}$ leads to
\[ k_1 \cdot n^{k_2}(\tilde{\gamma})=0. \]
It follows that $\tilde{\gamma}$ is in $T(G)$. 

The previous proposition ensures that for each $i$, $(\beta_i)|_{T(G)}$ is an isomorphism. Hence, we can form 
\[ \gamma=(((\beta_0)|_{T(G)})^{-1} \circ \cdots \circ ((\beta_k)|_{T(G)})^{-1})(\tilde{\gamma}). \] 
One checks that 
\[ \Phi(\gamma)=[ \gamma , 0]=[ (\beta_{k} \circ \cdots \beta_0)({\gamma}), k]=[ \tilde{\gamma}, k ] \]
as required.
\end{proof}

\begin{proposition} \label{inductiveLimitGenPropTorsionIncluded}
Suppose $G$ is a finitely generated abelian group, 
\[ G \xrightarrow{\beta_0} G \xrightarrow{\beta_1} G \xrightarrow{\beta_2} \cdots  \]
is an inductive system, and $\lim_{\rightarrow}(G, \beta_i)$ is the inductive limit group. Then $T(\lim_{\rightarrow}(G, \beta_i))$ is a finite group and $|T((G, \beta_i))|\le |T(G)|$.
\end{proposition}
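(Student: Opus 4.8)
The plan is to exhibit $T(\lim_{\rightarrow}(G,\beta_i))$ as an increasing union of homomorphic images of the finite group $T(G)$, and then to argue that such a union must stabilize. First I would fix notation: since $G$ is finitely generated abelian, $T(G)$ is finite, and for each $k\in\N$ I write $\iota_k : G \to \lim_{\rightarrow}(G,\beta_i)$ for the canonical map $\gamma \mapsto [\gamma,k]$. The defining identifications of the inductive limit give $\iota_k = \iota_{k+1}\circ\beta_k$, and every element of $\lim_{\rightarrow}(G,\beta_i)$ is of the form $\iota_k(\gamma)$ for some $k$ and $\gamma\in G$.

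Next I would record two observations. Since each $\beta_k$ is a homomorphism it maps $T(G)$ into $T(G)$, so the relation $\iota_k = \iota_{k+1}\circ\beta_k$ shows $\iota_k(T(G)) \subseteq \iota_{k+1}(T(G))$; thus the subgroups $\iota_k(T(G))$ form an increasing chain, each of cardinality at most $|T(G)|$. The crucial step is then to show that every torsion element lies in this chain, that is, $T(\lim_{\rightarrow}(G,\beta_i)) = \bigcup_k \iota_k(T(G))$. Given a torsion element $[\tilde{\gamma},k]$, there is an integer $N\geq 1$ with $N[\tilde{\gamma},k]=0$, which by the definition of equality in the inductive limit means $(\beta_{m-1}\circ\cdots\circ\beta_k)(N\tilde{\gamma})=0$ for some $m\geq k$. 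Setting $\gamma' := (\beta_{m-1}\circ\cdots\circ\beta_k)(\tilde{\gamma})$, one has $N\gamma'=0$, so $\gamma'\in T(G)$, while $[\gamma',m]=[\tilde{\gamma},k]$; hence $[\tilde{\gamma},k] = \iota_m(\gamma') \in \iota_m(T(G))$.

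Finally I would conclude by a stabilization argument. The cardinalities $|\iota_k(T(G))|$ form a nondecreasing sequence of integers bounded above by $|T(G)|$, so they are eventually constant, say equal from some index $K$ onward. Because the chain is increasing, this forces $\iota_K(T(G)) = \iota_{K+1}(T(G)) = \cdots$, so the union equals the single finite group $\iota_K(T(G))$. Therefore $T(\lim_{\rightarrow}(G,\beta_i)) = \iota_K(T(G))$ is finite with $|T(\lim_{\rightarrow}(G,\beta_i))| \leq |T(G)|$.

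I expect the main obstacle to be the surjectivity-type step: verifying that a torsion element of the limit is always represented, at some finite stage, by a genuine torsion element of $G$, rather than only by an element of $G$ that merely becomes torsion in the limit. The key point is that clearing the denominator $N$ and pushing forward along the transition maps converts the limit relation into an honest equation $N\gamma'=0$ inside $G$. Once this is in hand, the finiteness of $T(G)$ does the remaining work, and in particular it is exactly the uniform bound $|\iota_k(T(G))| \leq |T(G)|$ that prevents the increasing union of finite groups from being infinite.
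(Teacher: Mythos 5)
Your proof is correct, and its crucial step is exactly the one the paper's proof turns on: a torsion element $[\tilde{\gamma},k]$ of the limit, after clearing its order $N$ and pushing forward along the connecting maps, is represented at a later stage by a genuine torsion element $\gamma'=(\beta_{m-1}\circ\cdots\circ\beta_k)(\tilde{\gamma})\in T(G)$. Where you diverge is the endgame. The paper finishes with a pigeonhole argument: it takes $M=|T(G)|+1$ elements of $T(\lim_{\rightarrow}(G,\beta_i))$, pushes them all to a common stage with representatives in $T(G)$, and concludes that two must coincide, bounding the cardinality directly without ever describing $T(\lim_{\rightarrow}(G,\beta_i))$ as a group. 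You instead identify $T(\lim_{\rightarrow}(G,\beta_i))$ as the increasing union $\bigcup_k \iota_k(T(G))$ and then stabilize the chain; the stabilization step is not strictly necessary (a directed union of subgroups each of size at most $|T(G)|$ already has size at most $|T(G)|$), but your route buys a stronger structural conclusion than the paper's: $T(\lim_{\rightarrow}(G,\beta_i))=\iota_K(T(G))$ for some $K$, so the torsion subgroup of the limit is a homomorphic image, hence a quotient, of $T(G)$, not merely a group of bounded order. One further point in your favor: the paper justifies the torsion-representative step with ``as in the proof of the previous result,'' but the previous proposition's proof used the maps $\alpha_i$ (unavailable in the present setting) to conclude that the original representative $\tilde{\gamma}$ is itself torsion, whereas here one must instead replace the representative by its pushforward at a later index --- which is precisely what you spell out, and you were right to single it out as the delicate point.
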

\begin{proof}
Since $G$ is finitely generated abelian, $T(G)$ is a finite group. Take $M=|T(G)|+1$ elements in $T(\lim_{\rightarrow}(G, \beta_i))$, which we write as
\[ [\gamma_1, k_1], [\gamma_2, k_2], \ldots, [\gamma_M, k_M] \]
As in the proof of the previous result, we can assume that $\gamma_i \in T(G)$ for each $i=1, \ldots, M$. Furthermore, by applying the connecting maps in the inductive system, we can assume that $k_1=k_2= \ldots =k_M$. It follows from the pigeonhole principle that there exists $i \neq j$ such that $\gamma_i = \gamma_j$, which completes the proof.
\end{proof}

\subsection{Flat manifolds} \label{secFlatMan}
A flat manifold refers to a closed, connected, Riemannian flat manifold. Throughout this section and the rest of the paper, $Y$ is a flat manifold with dimension $d$. Examples of flat manifolds in low dimensions include the circle, the torus and the Klein bottle, see \cite{MR862114} for more details and many more examples (see in particular page 41 of \cite{MR862114}). The following basic properties will be used. The first two can be found in for example \cite{MR862114} and the third follows from the first two.
\begin{enumerate}
\item The fundamental group of $Y$, $\pi_1(Y)$, is torsion-free and fits within the following short exact sequence:
\[ 0 \rightarrow \Z^d \rightarrow \pi_1(Y) \rightarrow F \rightarrow 0 \]
where $\Z^d$ is maximal abelian and $F$ is a finite group (called the holonomy).
\item The short exact sequence in the previous statement is obtained from a $|F|$-fold cover of $Y$ by the $d$-torus. This covering map is denoted by $p$.
\item It follows from the previous statements that $Y$ is a model for the classifying space $B(\pi_1(Y))$ and likewise $\R^d$ is a model for $E(\pi_1(Y))$. Furthermore, $\pi_1(Y)$ is amenable and hence the Baum--Connes conjecture with coefficients holds for $\pi_1(Y)$.
\end{enumerate}
The next result is well-known, see for example \cite[Lemma 2.7]{MR2581917}.
\begin{proposition} \label{flatManTorsionHomology}
If $x \in T(H_*(Y))$, then the order of $x$ divides $|F|$. In particular, for any $k \in \N$ and $x \in T(H_*(Y))$, $(|F|+1)^k x =x$.
\end{proposition}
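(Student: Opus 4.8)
The plan is to run the standard transfer (corestriction) argument attached to the finite-index subgroup $\Z^d \le \pi_1(Y)$. Since $Y$ is a model for $B\pi_1(Y)$ by item (3), its homology agrees with group homology, $H_*(Y) \cong H_*(\pi_1(Y))$; likewise the $d$-torus is a model for $B\Z^d$, so $H_*(T^d) \cong H_*(\Z^d)$. Under these identifications the covering map $p$ of item (2) induces the homomorphism $p_* \colon H_*(\Z^d) \to H_*(\pi_1(Y))$ coming from the inclusion $\Z^d \hookrightarrow \pi_1(Y)$ of a subgroup of index $|F|$.

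First I would recall the transfer homomorphism $\tau \colon H_*(\pi_1(Y)) \to H_*(\Z^d)$ associated to this finite-index inclusion, whose defining property is that the composition $p_* \circ \tau \colon H_*(\pi_1(Y)) \to H_*(\pi_1(Y))$ equals multiplication by the index $|F|$. Next I would observe that $H_*(\Z^d) \cong H_*(T^d)$ is free abelian in every degree (the homology of the $d$-torus is an exterior algebra on $d$ generators), and in particular torsion-free. Hence, for any $x \in T(H_*(Y)) = T(H_*(\pi_1(Y)))$, the element $\tau(x)$ is at once torsion and a member of a torsion-free group, forcing $\tau(x) = 0$. Consequently
\[ |F| \cdot x = (p_* \circ \tau)(x) = p_*(0) = 0, \]
so the order of $x$ divides $|F|$.

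The ``in particular'' claim is then elementary arithmetic. Since the order of $x$ divides $|F|$ we have $|F| \cdot x = 0$, whence $(|F|+1)x = |F|\cdot x + x = x$; iterating (or observing $(|F|+1)^k \equiv 1 \pmod{|F|}$) gives $(|F|+1)^k x = x$ for every $k \in \N$.

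The only genuine content is invoking the transfer with its index-multiplication property together with the torsion-freeness of $H_*(\Z^d)$; everything else is bookkeeping. I expect the transfer step to be the crux, but it is entirely standard in group (co)homology, so no real obstacle arises.
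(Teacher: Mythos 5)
Your proof is correct: the transfer $\tau \colon H_*(\pi_1(Y)) \to H_*(\Z^d)$ for the index-$|F|$ inclusion satisfies $p_* \circ \tau = |F|\cdot \id$, and since $H_*(\Z^d) \cong H_*(T^d)$ is free abelian in every degree, any torsion class has $\tau(x)=0$ and hence $|F|x=0$, after which the $(|F|+1)^k x = x$ claim is immediate arithmetic. The paper gives no proof of its own, citing the statement as well-known from \cite[Lemma 2.7]{MR2581917}, and your transfer argument is exactly the standard proof underlying that citation, so the two approaches coincide.
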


\subsection{The Atiyah--Hirzebruch spectral sequence} \label{SecAHSS} A number results concerning the Atiyah--Hirzebruch spectral sequence are collected for future use. None of them are new and all are likely well-known to experts. The results are summarized here so that the computations in Section \ref{secExistence} are as easy as possible. Although a number of the results below hold in more generality, throughout $X$ is assumed to be a closed connected orientable manifold with dimension $d$.

Before getting to the spectral sequence, a few fundamental properties of the Steenrod square maps (see for example \cite[Chapter 10 Section 8]{MR2377868}) are discussed as they are relevant for the differentials in the spectral sequences. Recall that for each non-negative integer $m$, the Steenrod square map of degree $m$ is a map ${\rm Sq}^m : H^k(X;\Z/2\Z) \rightarrow H^{k+m}(X; \Z/2\Z)$. We will only need these maps when $m=2$ or $3$ and the formal definition is not required. The only properties needed are the following:
\begin{enumerate}
\item for $k=m$ it maps $x$ to $x \cup x$ (we denote $x\cup x$ by $x^2$);
\item we have that ${\rm Sq}^3= r \circ \beta \circ {\rm Sq}^2$ where 
\begin{enumerate}
\item $\beta : H^{k+2}(X; \Z/2\Z) \rightarrow H^{k+3}(X)$ is the Bockstein map and 
\item $r: H^{k+3}(X) \rightarrow H^{k+3}(X;\Z/2\Z)$ is the reduction mod two map.
\end{enumerate}
\end{enumerate}

We now move to the spectral sequences. The reader is invited to review \cite[Chapter 21]{MR2377868} for the notation used here. In addition, note that $p$ and $q$ have been suppressed from the notation of the differentials. We have the following fundamental properties of the Atiyah--Hirzebruch spectral sequences for $K$-theory $\{ E_m^{p, q} \}$ and $K$-homology $\{ E^m_{p,q} \}$ (recall that $X$ is an orientable manifold):
\begin{enumerate}
\item $E^3_{p, q} \cong E^2_{p, q} \cong \left\{ \begin{array}{cc} H_p(X) & q \hbox{ is even} \\ 0 & q \hbox{ is odd} \end{array} \right. $
\item $E_3^{p, q} \cong E_2^{p, q} \cong  \left\{ \begin{array}{cc} H^p(X) & q \hbox{ is even} \\ 0 & q \hbox{ is odd} \end{array} \right. $
\item The Atiyah--Hirzebruch spectral sequence for $K$-homology is a module over the Atiyah--Hirzebruch spectral sequence for $K$-theory. In particular, if $x\in H^*(X)$ and $[X]$ is the fundamental class of $X$ associated to a particular orientation, then we have
\[ d^3(x \cap [X])= d_3(x) \cap [X] \pm x \cap d^3([X]).
\]
\item The differential $d_3 : H^k(X) \rightarrow H^{k+3}(X)$ is given by $\beta \circ {\rm Sq}^2 \circ r$ where
\begin{enumerate}
\item $r: H^k(X) \rightarrow H^k(X;\Z/2\Z)$ is the reduction mod two map,
\item ${\rm Sq}^2: H^k(X; \Z/2\Z) \rightarrow H^{k+2}(X; \Z/2\Z)$ is the Steenrod square map, and
\item $\beta : H^{k+2}(X; \Z/2\Z) \rightarrow H^{k+3}(X)$ is the Bockstein map.
\end{enumerate}
\item If $d^3 \neq 0$, then 
\[ |T(K_*(X))| < |\bigoplus_{i} T(H_{*+2i}(X))|. \]
A short justification of this fact proceeds as follows. Since the Chern character is an isomorphism after tensoring with the rational numbers, the differentials in the Atiyah--Hirzebruch spectral sequence are pure torsion morphisms (see for example \cite[Chapter 21: Remark 4.7 and Theorem 4.8]{MR2377868} in the case of $K$-theory rather than $K$-homology). This implies that for all $p$, $q$
\[
|T(E^4_{p,q})| \le |T(E^3_{p, q})|= \left\{ \begin{array}{cc} |T(H_p(X))| &  q \hbox{ is even} \\ 1 & q \hbox{ is odd} \end{array} \right. 
\] 
Moreover, since $d^3\neq 0$, there exists $p$, $q$ such that
\[
|T(E^4_{p,q})| < |T(E^3_{p, q})|= \left\{ \begin{array}{cc} |T(H_p(X))| &  q \hbox{ is even} \\ 1 & q \hbox{ is odd} \end{array} \right. 
\]
where we have used the fact that the relevant torsion groups are finite because all groups considered are finitely generated abelian. Likewise,
for all $p$, $q$
\[
|T(E^{\infty}_{p,q})| \le |T(E^3_{p, q})|= \left\{ \begin{array}{cc} |T(H_p(X))| &  q \hbox{ is even} \\ 1 & q \hbox{ is odd} \end{array} \right. 
\] 
and for at least one $p$, $q$
\[
|T(E^{\infty}_{p,q})| < |T(E^3_{p, q})|= \left\{ \begin{array}{cc} |T(H_p(X))| &  q \hbox{ is even} \\ 1 & q \hbox{ is odd} \end{array} \right. 
\]
Using this and the fact that the Atiyah--Hirzebruch spectral sequence converges to the $K$-homology of $X$, it follows that (see for example \cite[Chapter 21: Assertion 4.3 and 4.5]{MR2377868} in the context of $K$-theory)
\[ |T(K_*(X))|  < |T(\bigoplus_{i} H_{*+2i}(X))|. \]
This completes the proof.
\end{enumerate} 

Finally, recall that the Stiefel--Whitney classes of $X$ (see for example \cite[Chapter 10 Definition 3.7]{MR2377868}) are classes $w_i(X) \in H^i(X; \Z/2\Z)$ and the integral Stiefel--Whitney classes of $X$ are classes $W_i(X)\in H^i(X)$. We only need the following property:
\[ r(W_3(X))=w_3(X) \]
where $r: H^3(X) \rightarrow H^3(X;\Z/2\Z)$ is the reduction mod two map.
\begin{proposition}
Suppose $X$ is an orientable manifold and $d_3$ is non-zero. Then $d^3$ is also non-zero.
\end{proposition}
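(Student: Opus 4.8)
The plan is to deduce the claim directly from the module relation for the two spectral sequences recorded above,
\[ d^3(x \cap [X]) = d_3(x) \cap [X] \pm x \cap d^3([X]), \]
using Poincar\'e duality to transport the nonvanishing of the $K$-theory differential $d_3$ across the cap product with the fundamental class.

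Since $d_3 \neq 0$, I would first fix a degree $k$ and a class $x \in H^k(X)$ with $d_3(x) \neq 0$, so that $d_3(x) \in H^{k+3}(X)$ is nonzero. Because $X$ is a closed orientable $d$-manifold, Poincar\'e duality guarantees that capping with the fundamental class defines an isomorphism $\cap [X] : H^{k+3}(X) \to H_{d-k-3}(X)$; in particular $d_3(x) \cap [X] \neq 0$. Now suppose toward a contradiction that $d^3$ were identically zero on the $E^3$-page of the $K$-homology spectral sequence. Then both $d^3(x \cap [X])$ and $d^3([X])$ vanish, and the module relation collapses to $0 = d_3(x) \cap [X]$, contradicting the previous sentence. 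Hence $d^3 \neq 0$.

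The argument is short, and I expect the only genuine subtlety to be the appeal to Poincar\'e duality: it is precisely the hypothesis that $X$ is closed and orientable that produces the fundamental class $[X]$ and makes $\cap [X]$ an isomorphism on integral (co)homology, including on the torsion subgroups with which the surrounding discussion is really concerned. Everything else is a formal manipulation of the given Leibniz-type relation, and the sign ambiguity there is immaterial since only nonvanishing is used.
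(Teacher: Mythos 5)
Your proof is correct and is essentially the paper's argument: the paper splits into the case $d^3([X]) \neq 0$ (done immediately) and the case $d^3([X]) = 0$, where the module relation and Poincar\'e duality give $d^3(x \cap [X]) = d_3(x) \cap [X] \neq 0$, which is exactly your contradiction argument recast directly. The only difference is presentational (contrapositive versus case analysis), so there is nothing substantive to change.
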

\begin{proof}
Fix an orientation on $X$ to obtain the fundamental class $[X] \in H_d(X)$. If $d^3([M]) \neq 0$ then we are done. 

Otherwise, take $x\in H^*(X)$ such that $d_3(x) \neq 0$. Then, using the module structure, Poincar\'e duality, and the fact that $d^3([X])=0$, we obtain
\[
d^3( x \cap [X]) = d_3(x) \cap [X] \neq 0.
\]
\end{proof}
\begin{proposition} \label{propAboutNonEqualTorsion}
If $X$ is an orientable manifold and $w_3(X)^2$ is non-zero, then $d_3$ is nonzero. In particular, under these assumptions on $X$,
\[ |T(K_*(X))| < |\bigoplus_{i} T(H_{*+2i}(X))| \]
\end{proposition}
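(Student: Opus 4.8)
The plan is to produce an explicit class on which the $K$-homology differential's companion $d_3$ does not vanish, and then to invoke the two facts already recorded in Section \ref{SecAHSS}: that $d_3 \neq 0$ forces $d^3 \neq 0$ (the previous proposition), and that $d^3 \neq 0$ forces the desired strict inequality (item (5) of the list of properties of the Atiyah--Hirzebruch spectral sequences). So the entire content of the argument is the implication: $w_3(X)^2 \neq 0$ implies $d_3 \neq 0$.

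The natural test class is the integral Stiefel--Whitney class $W_3(X) \in H^3(X)$, precisely because its mod-two reduction is $r(W_3(X)) = w_3(X)$. First I would apply the formula for the differential from item (4), namely $d_3 = \beta \circ {\rm Sq}^2 \circ r$ on $H^3(X)$, to obtain $d_3(W_3(X)) = \beta({\rm Sq}^2(w_3(X)))$, which lands in $H^6(X)$. To detect that this class is nonzero, I would reduce it mod two and use the composition identity ${\rm Sq}^3 = r \circ \beta \circ {\rm Sq}^2$ recorded among the properties of the Steenrod squares: this gives $r(d_3(W_3(X))) = (r \circ \beta \circ {\rm Sq}^2)(w_3(X)) = {\rm Sq}^3(w_3(X))$. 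Since $w_3(X)$ sits in degree three, the property that ${\rm Sq}^m$ sends a degree-$m$ class $x$ to $x^2$ yields ${\rm Sq}^3(w_3(X)) = w_3(X)^2$.

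By hypothesis $w_3(X)^2 \neq 0$, so $r(d_3(W_3(X))) \neq 0$, whence $d_3(W_3(X)) \neq 0$ and in particular $d_3 \neq 0$. Applying the previous proposition then gives $d^3 \neq 0$, and item (5) of Section \ref{SecAHSS} delivers $|T(K_*(X))| < |\bigoplus_{i} T(H_{*+2i}(X))|$, completing the proof.

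I do not expect a genuine obstacle here beyond careful bookkeeping. One must check that all the degree shifts line up so that $d_3$, ${\rm Sq}^2$, $\beta$, and the two reductions $r$ compose exactly as written (the $r$ in the formula for $d_3$ and the $r$ in the identity for ${\rm Sq}^3$ are both the mod-two reduction, merely in different degrees), and one must recognize $W_3(X)$ as the correct class to feed into $d_3$. Once that class is chosen, everything reduces to a direct substitution into the identities collected in Section \ref{SecAHSS}.
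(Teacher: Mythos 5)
Your proof is correct and follows essentially the same route as the paper: both use $W_3(X)$ as the test class together with the identities $r(W_3(X))=w_3(X)$, ${\rm Sq}^3 = r\circ\beta\circ{\rm Sq}^2$, and ${\rm Sq}^3(x)=x^2$ in degree three to show $d_3(W_3(X))\neq 0$, then invoke the previous proposition and item (5). The only difference is cosmetic — you compute $d_3(W_3(X))$ first and detect it via mod-two reduction, while the paper runs the same chain of identities in the reverse order.
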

\begin{proof}
Firstly, by assumption, ${\rm Sq}^3(w_3(X))=w_3(X)^2 \neq 0$. Then, since 
\[ {\rm Sq}^3(w_3(X))=(r \circ \beta \circ {\rm Sq}^2)(w_3(X)), \]
it follows that 
\[ (\beta \circ {\rm Sq}^2)(w_3(X))\neq 0. \]
Finally, since $r(W_3(X))=w_3(X)$, 
\[ d_3(W_3(X))=(\beta \circ {\rm Sq}^2 \circ r)(W_3(X))=(\beta \circ {\rm Sq}^2)(w_3(X))\neq 0. \]
This completes the proof of the first statement.

The ``in particular" part of the theorem follows using the previous proposition and Item (5) in the list of properties of the Atiyah--Hirzebruch spectral sequence.
\end{proof}
\section{Expansive endomorphisms of flat manifolds and odometers}
Throughout this section, $Y$ is a flat manifold and $g: Y \rightarrow Y$ is an expanding endomorphism. That is (see page 176 of \cite{ShubExp}) there exists $C>0$ and $\lambda>1$ such that $|| Tg^k v || \ge c\lambda^k ||v ||$ for each $v \in TY$ and strictly positive integer $k$. Here $|| \cdot ||$ denotes a fixed Riemannian metric, but it is worth noting that being expanding is independent of the choice of metric (although the particular constants $C$ and $\lambda$ do depend on the metric).

By \cite[Proposition 3]{ShubExp}, $g$ is a covering map and since $Y$ is compact, $g$ is a $n$-fold cover for some $n \ge 2$. By \cite[Theorem 1]{ShubExp}, $g$ has a fixed point $y_0$. We will use this as our based point, so $\pi_1(Y)$ denotes $\pi(Y, y_0)$. Associated to $g$ is a chain of finite index, proper subgroup inclusions:
\[
\pi_1(Y) \supset g_*(\pi_1(Y)) \supset g^2_*(\pi(Y)) \supset \cdots
\] 
The associated odometer is obtained as follows. Let
\[ \Omega = \lim_{\leftarrow} (\Omega_i, f_{i-1}^i) \]
where $\Omega_i= \pi_1(Y)/ g^i_*(\pi(Y))$ and $f_{i-1}^i$ is given by inclusion of cosets. Each $\Omega_i$ is a finite set (containing more than one element) and hence $\Omega$ is a Cantor set. An element in $\Omega$ can be written as
\[ (\gamma_0 \pi_1(Y), \gamma_1 g_*(\pi_1(Y)), \gamma_2 g^2_*(\pi_1(Y)), \ldots ). \]
The odometer action of $\pi_1(Y)$ on $\Omega$ is defined via
\[
\gamma \cdot (\gamma_0 \pi_1(Y), \gamma_1 g_*(\pi_1(Y)), \gamma_2 g^2_*(\pi_1(Y)), \ldots ) =(\gamma \gamma_0 \pi_1(Y), \gamma \gamma_1 g_*(\pi_1(Y)), \gamma \gamma_2 g^2_*(\pi_1(Y)), \ldots ).
\]
The odometer action is minimal, see for example \cite[Section 2.1]{MR2427048}. By Proposition 4 on page 181 of \cite{ShubExp}, 
\[ \bigcap_{k\ge 0} g^k_*(\pi_1(Y))=\{ e\} \]
but (again see \cite[Section 2.1]{MR2427048}) this is not enough to conclude that the action is free. This is because the subgroup $g_*(\pi_1(Y)) \subseteq \pi_1(Y)$ is typically not normal, see \cite[Corollary 1.18]{MR3784526}. Nevertheless the odometer action associated to an expanding endomorphism is indeed free and this is likely known. I was unable to find a precise reference so a proof has been included.
\begin{proposition} \label{actionFree}
The odometer action associated to an expanding endomorphism $g: Y \rightarrow Y$ is free.
\end{proposition}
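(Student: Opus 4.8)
The plan is to show that the only element of $\pi_1(Y)$ fixing a point of $\Omega$ is the identity. Suppose $\gamma \in \pi_1(Y)$ fixes the point $(\gamma_0\pi_1(Y), \gamma_1 g_*(\pi_1(Y)), \ldots) \in \Omega$. By the definition of the action, this means that for every $i \ge 0$ we have $\gamma \gamma_i g^i_*(\pi_1(Y)) = \gamma_i g^i_*(\pi_1(Y))$, equivalently $\gamma_i^{-1}\gamma\gamma_i \in g^i_*(\pi_1(Y))$. So the first task is to reduce freeness to understanding the intersection of all the conjugated subgroups: $\gamma$ lies in $\bigcap_{i\ge 0} \gamma_i\, g^i_*(\pi_1(Y))\, \gamma_i^{-1}$, and I want to conclude this intersection is trivial.

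The reference to $\bigcap_{k} g^k_*(\pi_1(Y)) = \{e\}$ handles the untwisted case, but as the text explicitly notes, the subgroups $g^i_*(\pi_1(Y))$ need not be normal, so I cannot directly reduce the conjugated intersection to this. The key second step is therefore to exploit the geometry of the expanding endomorphism rather than pure group theory. I would use the fact that, lifting to the universal cover $\R^d$, an expanding endomorphism is covered by an expanding affine map $\tilde g$ of $\R^d$, and that $g^i_*(\pi_1(Y))$ is exactly the deck group of the tower corresponding to the $i$-fold iterate. The conjugated element $\gamma_i^{-1}\gamma\gamma_i$ being in $g^i_*(\pi_1(Y))$ means, after translating by the lift determined by $\gamma_i$, that a fixed deck transformation shrinks in displacement at a definite geometric rate as $i \to \infty$; because $\tilde g$ expands distances by a factor at least $\lambda^i$, any nontrivial deck transformation lying in the $i$-th subgroup must have translation length bounded below growing like $\lambda^i$, forcing $\gamma = e$.

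Concretely, I would realize $\pi_1(Y)$ as a crystallographic group acting on $\R^d$ and write the covering $g$ as induced by $\tilde g = A$, a linear (or affine) expansion with $\|A^{-1}\| \le \lambda^{-1} < 1$, so that $g^i_*(\pi_1(Y)) = A^i \pi_1(Y) A^{-i}$ as subgroups of isometries conjugated inside the affine group. An element $\delta \in g^i_*(\pi_1(Y))$ then has the form $A^i \delta' A^{-i}$ with $\delta' \in \pi_1(Y)$, and its distance from moving the origin is controlled: the translational part is scaled by $A^i$. Since $\pi_1(Y)$ is a discrete subgroup, there is a positive lower bound on the displacement of any nontrivial element at a fixed point of $\R^d$; combining this lower bound with the exponential contraction of $A^{-i}$ on the relevant quantity forces the displacement of $\gamma_i^{-1}\gamma\gamma_i$ to be simultaneously bounded below and to tend to zero unless $\gamma = e$.

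The hard part will be making the geometric estimate uniform in $i$ despite the presence of the conjugating elements $\gamma_i$, which range over a growing set of coset representatives and are not bounded. I expect to handle this by observing that $\gamma_i^{-1}\gamma\gamma_i$ and $\gamma$ have the same conjugacy-invariant data (in particular the linear/rotational part of $\gamma$ is fixed, and only finitely many linear parts occur since the holonomy $F$ is finite), so the estimate reduces to controlling translation lengths, which behave well under the expansion $A$. Once the displacement estimate is in place, letting $i \to \infty$ gives $\gamma = e$, establishing freeness.
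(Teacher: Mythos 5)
Your reduction of freeness to the triviality of $\bigcap_{i} \gamma_i\, g^i_*(\pi_1(Y))\, \gamma_i^{-1}$ is correct, and your strategy is a genuinely different route from the paper's: the paper never passes to the universal cover. It instead realizes $\Omega$ as the space of backward orbits $(y_0, y_1, y_2, \ldots)$ of the fixed point $y_0$ and argues downstairs: if $[\gamma]$ fixes such a point, the successive lifts of $\gamma$ through $g, g^2, \ldots$ are \emph{loops} based at the $y_k$ of arclength at most $L/(C\lambda^k)$, so some lift eventually lies in a chart $U \cong \R^d$, is nullhomotopic, and pushing forward by $g^k$ shows $[\gamma]=e$. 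Your version trades shrinking loops downstairs for growing displacement upstairs, and the ``hard part'' you flag (the unbounded conjugators $\gamma_i$) dissolves exactly as you predict in your closing lines: the minimal displacement $\ell(\delta)=\inf_z d(\delta z, z)$ is invariant under conjugation by the isometries $\gamma_i$, every nontrivial deck transformation satisfies $\ell(\delta)\ge \mathrm{sys}(Y)>0$ by compactness, and $\ell(\tilde g^{\,i}\delta\,\tilde g^{-i})\ge C\lambda^i \ell(\delta)$, so a nontrivial $\gamma$ fixing a point of $\Omega$ would force the fixed finite number $\ell(\gamma)$ to exceed $C\lambda^i\,\mathrm{sys}(Y)$ for every $i$. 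Note that it must be the \emph{minimal} displacement that carries the argument: pointwise displacement of $\gamma$ at the drifting points $\gamma_i\tilde g^{\,i}x$ is genuinely unbounded when the holonomy part of $\gamma$ is nontrivial, so the pointwise version sketched in your third paragraph does not close as stated, whereas your translation-length formulation does. Both proofs rest on the same two ingredients---uniform expansion and a positive systole/injectivity radius for the compact $Y$---but yours isolates them in a conjugation-invariant quantity, while the paper's stays elementary and entirely downstairs.

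One misstep needs repair: you assert that an expanding endomorphism ``is covered by an expanding affine map $\tilde g = A$'' of $\R^d$ with $\|A^{-1}\|\le \lambda^{-1}$. This is false for a general expanding endomorphism (already on the circle, $x\mapsto 2x+\epsilon\sin(2\pi x)$ is expanding with no affine lift); expanding maps are only \emph{topologically} conjugate to affine models, and such a conjugacy destroys the metric estimates you need. Fortunately, affineness is never used in an essential way: the lift $\tilde g$ of $g$ fixing a lift $\tilde y_0$ of the fixed point $y_0$ is a diffeomorphism of $\R^d$ (a covering of a simply connected space) satisfying $g^i_*(\pi_1(Y)) = \tilde g^{\,i}\,\pi_1(Y)\,\tilde g^{-i}$, and path-length expansion gives $d(\tilde g^{\,i}x, \tilde g^{\,i}y)\ge C\lambda^i d(x,y)$ directly, since any path joining $\tilde g^{\,i}x$ to $\tilde g^{\,i}y$ is of the form $\tilde g^{\,i}\circ c$. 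That inequality is all the displacement estimate above requires, so replacing $A$ by the honest nonlinear lift $\tilde g$ completes your argument and yields a valid alternative to the paper's proof.
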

\begin{proof}
To begin, fix a Riemannian metric on $Y$ and recall that $y_0$ is a fixed point of $g$. The odometer action can be describe in terms of preimages of $y_0$ with respect to $g$, $g^2$, etc. To do so, notice that there is a one-to-one correspondence between $g^{-1}(y_0)$ and cosets associated to the subgroup $g_*(\pi_1(Y))$. 

This correspondence is given as follows. Given a coset take a loop, $\gamma$, based at $y_0$ representing a class in that coset. Let $\tilde{\gamma}: [0,1] \rightarrow Y$ denote the unique lift of $\gamma$ to a path starting at $y_0$. Then $\tilde{\gamma}(1) \in g^{-1}(y_0)$. Furthermore, this defines the required one-to-one correspondence. One must check the process is well-defined and one-to-one but this follow from elementary properties of covering space theory.

Repeating this process with $g^{-2}(y_0)$, $g^{-3}(y_0)$, etc, we have that $\Omega$ is homeomorphic to 
\[
\{ (y_0, y_1, y_2, \ldots ) \mid y_0 \hbox{ is the fixed point above and }g(y_{i+1})=y_i \} 
\]
where the topology is the subspace topology. Furthermore, the odometer action with respect to this realization is given as follows. Let $\gamma$ be a loop based at $y_0$ representing an element in $\pi_1(Y)$ and $y_1 \in g^{-1}(y_0)$. Let $\tilde{\gamma}_1 : [0,1] \rightarrow Y$ be the unique lift of $\gamma$ to a path starting at $y_1$. Then $[\gamma] \cdot y_1:= \tilde{\gamma}_1(1)$. As with the discussion at the topological space level, by repeating this process one obtains the odometer action on the space  
\[
\{ (y_0, y_1, y_2, \ldots ) \mid y_0 \hbox{ is the fixed point above and }g(y_{i+1})=y_i \}.
\]
We can now show that the action is free. Let $\gamma$ be a loop based at $y_0$ representing a class in $\pi_1(Y)$, which we can and will assume is smooth so that it has a well-defined arclength. Suppose that 
\[
[\gamma] \cdot (y_0, y_1, y_2, \ldots )= (y_0, y_1, y_2, \ldots ).
\]
We must show that $[\gamma]$ is the identity in $\pi_1(Y)$. By the definition of the action discussed above, we have that the unique lift of $\gamma$ to a path, $\tilde{\gamma}_1:[0,1] \rightarrow Y$ starting at $y_1$ satisfies the following:
\begin{enumerate}
\item Because $[\gamma] \cdot y_1= y_1$, $\tilde{\gamma}_1$ is a loop based at $y_1$ (rather than just a path starting at $y_1$);
\item Because $g$ is expanding and $g \circ \tilde{\gamma}_1= \gamma$ (by the definition of lift), the arclength of $\tilde{\gamma}_1$ is less than or equal to $\frac{L}{C\lambda}$ where $L$ is the arclength of $\gamma$ and $C$, $\lambda$ are constants from the definition of expanding endomorphism.
\end{enumerate}
Noticing that $\tilde{\gamma}_1$ is a loop, we denote it by $\gamma_1$. The process applied (to $\gamma$) above can be applied to $\gamma_1$. We obtain a loop based at $y_2$, $\gamma_2$ with arclength less than or equal to $\frac{L}{C\lambda^2}$ that is a lift of $\gamma_1$. Continuing the process, for each positive integer $k$, we obtain a loop $\gamma_k$ based at $y_k$ with arclength less than or equal to $\frac{L}{C\lambda^k}$ that is a lift of $\gamma_{k-1}$. Since $\lambda>1$, there exists $k$ and open set $U\subseteq Y$ containing $y_k$ such that $U \cong \R^d$ and $\gamma_k(t)\in U$ for all $t\in [0,1]$. It follows that $\gamma_k$ is nullhomotopic. But then $\gamma= g^k \circ \gamma_k$ is also nullhomotopic and hence $[ \gamma ]$ is the identity in $\pi_1(Y)$.
\end{proof}
In summary, the properties discussed above imply the following.
\begin{proposition} \label{HKassumption}
Suppose (as above) $Y$ is a flat manifold and $g: Y \rightarrow Y$ is an expanding endomorphism. Then the transformation groupoid associated to the odometer action of $\pi_1(Y)$ on $\Omega$ is a second countable, \'etale, principal, minimal groupoid. Moreover, its unit space is the Cantor set.
\end{proposition}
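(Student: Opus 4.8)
The plan is to treat this as a summary statement and verify each listed property by invoking facts already in place, since the genuine content has been isolated into the earlier results. First I would recall the structure of the transformation groupoid: as a topological space $\Omega \rtimes \pi_1(Y)$ is $\Omega \times \pi_1(Y)$ where $\pi_1(Y)$ carries the discrete topology, the unit space is $\Omega$ embedded as $\Omega \times \{e\}$, and for a composable pair the source and range maps recover the two endpoints of the action. With this description in hand, the verification splits into the purely formal properties and the two that carry mathematical content.

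For the formal properties I would argue as follows. The unit space is $\Omega$, which was shown in Section 2 to be a Cantor set; in particular it is compact, second countable, and totally disconnected, so the groupoid is in addition ample. Second countability of the whole groupoid then follows because $\Omega$ is second countable and $\pi_1(Y)$ is countable, being finitely generated as the fundamental group of a compact manifold, so $\Omega \times \pi_1(Y)$ is second countable. For the \'etale property I would use the standard fact that the transformation groupoid of a discrete group acting on a space by homeomorphisms is \'etale: each slice $\Omega \times \{\gamma\}$ is clopen, and on each slice the source and range maps restrict to homeomorphisms onto open subsets of $\Omega$, so $r$ and $s$ are local homeomorphisms.

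The two properties that require input from the rest of the paper are principality and minimality. For principality, the isotropy group at $\omega \in \Omega$ consists of those $\gamma$ with $\gamma \cdot \omega = \omega$; by Proposition \ref{actionFree} the odometer action is free, so this set is trivial and the groupoid is principal. For minimality, I would invoke the general correspondence that a transformation groupoid is minimal exactly when every orbit of the underlying action is dense, together with the minimality of the odometer action recorded above. The only step carrying real difficulty, namely freeness of the action (and hence principality), has already been overcome in Proposition \ref{actionFree}; the remaining verifications here are standard and essentially bookkeeping.
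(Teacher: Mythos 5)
Your proposal is correct and follows essentially the same route as the paper's proof: both treat the \'etale property and second countability as standard facts about transformation groupoids of countable discrete groups, identify the unit space with the Cantor set $\Omega$, and derive minimality and principality from the minimality of the odometer action and its freeness (Proposition \ref{actionFree}), respectively. The only difference is that you spell out the routine verifications (the slice description for \'etale-ness, countability of $\pi_1(Y)$) that the paper compresses into a single line.
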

\begin{proof}
The transformation groupoid of the action of a discrete group is always \'etale and the unit space in our situation is $\Omega$ (the Cantor set). The groupoid is clearly second countable. It is minimal because the action is minimal and it is principal because the action is free. 
\end{proof}

\section{The $K$-theory and homology of odometers}

\subsection{Homology}
As in the previous section, $Y$ is a flat manifold of dimension $d$, $g: Y \rightarrow Y$ is an expanding endomorphism (it is an $n$-fold cover), $\Omega$ is the Cantor set associated with the odometer action of $\pi_1(Y)$, and $\Omega \rtimes \pi_1(Y)$ is the associated transformation groupoid. 
\begin{theorem} \label{inductiveLimitHomology}
The homology of $\Omega \rtimes \pi_1(Y)$ is isomorphic to the inductive limit group: $\lim_{\rightarrow} (H_*(Y), \tilde{g}_i)$
where each $\tilde{g}_i : H_*(Y) \rightarrow H_*(Y)$ with the property that there exists $\tilde{h}_i: H_*(Y) \rightarrow H_*(Y)$ such that $\tilde{h}_i \circ \tilde{g}_i =$ multiplication by $n$. In particular, $\tilde{g}_i$ is a rational isomorphism.
\end{theorem}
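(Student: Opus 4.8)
The plan is to reduce the groupoid homology to ordinary group homology with coefficients, identify the resulting inductive system through Shapiro's lemma, and then read off the transfer relation from the finite-index structure of the chain $\pi_1(Y) \supset g_*(\pi_1(Y)) \supset g^2_*(\pi_1(Y)) \supset \cdots$.

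First I would recall, following \cite[Section 2.3]{Scarparo}, that for the transformation groupoid of a discrete group acting on a totally disconnected space, the groupoid homology is the group homology of $\pi_1(Y)$ with coefficients in the module $C(\Omega, \Z)$ of continuous integer-valued functions, so that
\[ H_*(\Omega \rtimes \pi_1(Y)) \cong H_*(\pi_1(Y); C(\Omega, \Z)). \]
Writing $G_i := g^i_*(\pi_1(Y))$, the presentation $\Omega = \lim_{\leftarrow}(\Omega_i, f^i_{i-1})$ with $\Omega_i = \pi_1(Y)/G_i$ exhibits $C(\Omega, \Z)$ as the direct limit $\lim_{\rightarrow} C(\Omega_i, \Z)$, where the bonding maps are pullback of locally constant functions along the projections $\Omega_{i+1} \to \Omega_i$, and each $C(\Omega_i, \Z)$ is the permutation module $\Z[\pi_1(Y)/G_i]$. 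Since group homology commutes with filtered colimits of coefficient modules, this gives
\[ H_*(\Omega \rtimes \pi_1(Y)) \cong \lim_{\rightarrow} H_*(\pi_1(Y); \Z[\pi_1(Y)/G_i]). \]

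Next I would apply Shapiro's lemma to each term to obtain $H_*(\pi_1(Y); \Z[\pi_1(Y)/G_i]) \cong H_*(G_i)$, and verify that under this identification the bonding map becomes the homological transfer $\tau_i : H_*(G_i) \to H_*(G_{i+1})$ attached to the finite-index inclusion $G_{i+1} \subset G_i$ (this is exactly where one must get the variance right: the pullback of functions corresponds to the wrong-way map, not to the inclusion-induced map). The structural input is that $g$ is a covering map, so $g_*$ is injective and $g^i_* : \pi_1(Y) \to G_i$ is an isomorphism onto its image; together with the fact that $Y$ is a model for $B\pi_1(Y)$ (Section \ref{secFlatMan}), this produces isomorphisms $\phi_i : H_*(Y) \xrightarrow{\cong} H_*(G_i)$. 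Transporting the system $(H_*(G_i), \tau_i)$ through the $\phi_i$ converts it into a system $(H_*(Y), \tilde g_i)$ with $\tilde g_i := \phi_{i+1}^{-1} \circ \tau_i \circ \phi_i$, identifying $H_*(\Omega \rtimes \pi_1(Y))$ with $\lim_{\rightarrow}(H_*(Y), \tilde g_i)$.

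For the transfer relation, since $g$ is an $n$-fold cover each index $[G_i : G_{i+1}]$ equals $n$, and the standard identity $\iota_* \circ \tau_i = n \cdot \id$ on $H_*(G_i)$ holds, where $\iota_* : H_*(G_{i+1}) \to H_*(G_i)$ is induced by inclusion. Setting $\tilde h_i := \phi_i^{-1} \circ \iota_* \circ \phi_{i+1}$ then yields $\tilde h_i \circ \tilde g_i = \phi_i^{-1}(\iota_* \circ \tau_i)\phi_i = $ multiplication by $n$, as required. The \emph{in particular} clause follows formally: rationally $\tilde g_i$ is injective because $n$ is invertible in $\Q$, and since both its source and target are isomorphic to the finite-dimensional space $H_*(Y;\Q)$, an injective endomorphism is automatically surjective, so $\tilde g_i$ is a rational isomorphism. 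I expect the main obstacle to be the careful bookkeeping that the bonding maps correspond to transfers under Shapiro's lemma and that it is the composite $\iota_* \circ \tau_i$—rather than the reverse composite, which is not multiplication by the index—that gives $\tilde h_i$; the reduction to $H_*(\pi_1(Y); C(\Omega,\Z))$ is the single step where I would rely directly on Scarparo's framework rather than reprove it.
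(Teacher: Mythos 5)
Your proposal is correct and follows essentially the same route as the paper: your Shapiro's-lemma derivation of $H_*(\Omega \rtimes \pi_1(Y)) \cong \lim_{\rightarrow}(H_*(G_i), \tau_i)$ is precisely the content of \cite[Proposition 2.4]{Scarparo}, which the paper simply cites, and your identity $\iota_* \circ \tau_i = n\cdot\id$ is the paper's appeal to \cite[Proposition III.9.5 (ii)]{Brown}, with the identifications $H_*(G_i) \cong H_*(Y)$ via $g^i_*$ and $Y = B\pi_1(Y)$ handled identically. Your explicit finite-dimensionality argument for rational surjectivity is a welcome detail the paper leaves implicit, since $\tilde{g}_i \circ \tilde{h}_i$ need not be multiplication by $n$.
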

\begin{proof}
By \cite[Proposition 2.4]{Scarparo}, the homology of $\Omega \rtimes \pi_1(Y)$ is isomorphic to the inductive limit group:
\[
\lim_{\rightarrow} (H_*(p^i_*(\pi_1(Y))), {\rm tr}_i^{i+1})
\]
where ${\rm tr}_i^{i+1}$ is the transfer map in group homology. By \cite[Proposition III.9.5 (ii)]{Brown}, the connecting maps have the required property. Moreover, $Y$ is a model for $B(\pi_1(Y))$ and for each $i$, $p^i_*(\pi_1(Y)) \cong \pi_1(Y)$. Hence (for each $i$) $H_*(p^i_*(\pi_1(Y))) \cong H_*(Y)$.

Finally, the ``in particular" part of the theorem follows from the fact that $\tilde{h}_i \circ \tilde{g}_i =$ multiplication by $n$.
\end{proof}

\subsection{$K$-theory}
As in the previous section, $Y$ is a flat manifold of dimension $d$, $g: Y \rightarrow Y$ is an expanding endomorphism (it is an $n$-fold cover), and $\Omega$ is the Cantor set associated with the odometer action of $\pi_1(Y)$. The (reduced) transformation groupoid $C^*$-algebra of the odometer actions is $C^*_r(\Omega \rtimes \pi_1(Y)) \cong C(\Omega) \rtimes_r \pi_1(Y)$ where we note that $\pi_1(Y)$ is amenable so there is no difference between the full and reduced $C^*$-algebras so we will drop the $r$ from the notation. 
\begin{theorem} \label{inductiveLimitKtheory}
The $K$-theory of $C(\Omega) \rtimes \pi_1(Y)$ is isomorphic to the inductive limit group:
\[ \lim_{\rightarrow} ( K_*(Y), \hat{g}_i) \]
Moreover, each map $\hat{g}_i : K_*(Y) \rightarrow K_*(Y)$ is a rational isomorphism.
\end{theorem}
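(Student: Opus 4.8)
The plan is to mirror the proof of Theorem~\ref{inductiveLimitHomology}, replacing group homology with the $K$-theory of group $C^*$-algebras and the homology transfer maps with their operator-algebraic counterparts; here the Baum--Connes conjecture plays the role that the identity $Y \simeq B\pi_1(Y)$ played on the homology side.

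First I would write $C(\Omega) = \lim_{\rightarrow} C(\Omega_i)$, the inductive limit of the finite-dimensional commutative algebras $C(\Omega_i)$ with connecting $*$-homomorphisms the pullbacks along the bonding maps $f_i^{i+1}$. Crossing with the amenable group $\pi_1(Y)$ and using continuity of $K$-theory gives
\[ K_*(C(\Omega) \rtimes \pi_1(Y)) \cong \lim_{\rightarrow} K_*(C(\Omega_i) \rtimes \pi_1(Y)). \]
Each $\Omega_i = \pi_1(Y)/g^i_*(\pi_1(Y))$ is a single transitive orbit, so by Green's imprimitivity theorem (the Morita equivalence underlying \cite[Section 2.2]{Scarparo}) the algebra $C(\Omega_i) \rtimes \pi_1(Y)$ is Morita equivalent to $C^*(g^i_*(\pi_1(Y)))$. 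Since $g_*$ is injective, $g^i_*(\pi_1(Y)) \cong \pi_1(Y)$ is torsion-free and amenable, so Baum--Connes holds and, together with $Y \simeq B\pi_1(Y)$, yields
\[ K_*(C(\Omega_i) \rtimes \pi_1(Y)) \cong K_*(C^*(g^i_*(\pi_1(Y)))) \cong K_*(B(g^i_*(\pi_1(Y)))) \cong K_*(Y), \]
the $K$-homology of $Y$. This identifies the inductive system with $(K_*(Y), \hat g_i)$.

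It then remains to understand the connecting maps. Tracking the pullback $C(\Omega_i) \to C(\Omega_{i+1})$ through the imprimitivity bimodules, $\hat g_i$ is the $K$-theoretic transfer (wrong-way map) attached to the index-$n$ inclusion $g^{i+1}_*(\pi_1(Y)) \subset g^i_*(\pi_1(Y))$, equivalently the transfer in $K$-homology for the corresponding $n$-fold cover of classifying spaces. To see that each $\hat g_i$ is a rational isomorphism I would compare with the homology side through the Chern character $\ch : K_*(Y) \otimes \Q \xrightarrow{\cong} \bigoplus_i H_{*+2i}(Y) \otimes \Q$, which is natural with respect to transfers along finite covers. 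Under $\ch$ the map $\hat g_i \otimes \id_{\Q}$ corresponds to $\bigoplus_i \tilde g_i \otimes \id_{\Q}$, and Theorem~\ref{inductiveLimitHomology} already shows that each $\tilde g_i$ is a rational isomorphism; hence so is $\hat g_i$.

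The main obstacle is this last step. The delicate point---and, I suspect, the reason the conclusion is stated only up to rational isomorphism rather than asserting an integral companion relation $\hat h_i \circ \hat g_i = n$ as on the homology side---is that the integral induction/restriction calculus for the inclusion $C^*(g^{i+1}_*(\pi_1(Y))) \subset C^*(g^i_*(\pi_1(Y)))$ is more subtle than Brown's \cite[III.9.5(ii)]{Brown} for group homology, so one should not expect the clean integral identity to survive. Routing rationality through the Chern character and Theorem~\ref{inductiveLimitHomology} avoids this issue, since all that is needed is naturality of $\ch$ for the finite-cover transfers together with the rational isomorphism already established in homology.
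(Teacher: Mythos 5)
Your proposal matches the paper's proof essentially step for step: the paper likewise writes $C(\Omega)\rtimes\pi_1(Y)$ as $\lim_{\rightarrow} C(\Omega_i)\rtimes\pi_1(Y)$, identifies each piece via \cite[Proposition 2.3]{Scarparo} (the same Morita-equivalence content as your appeal to Green's imprimitivity) together with Baum--Connes, torsion-freeness, and $Y\simeq B\pi_1(Y)$, and then treats the connecting maps as pushforwards along the covering maps $\Omega_{i+1}\to\Omega_i$ admitting transfers, where the Chern-character comparison with Theorem \ref{inductiveLimitHomology} that you use to get rational invertibility is precisely one of the two routes the paper itself offers (with details likewise omitted). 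Your closing observation is also apt: the paper states only the rational conclusion here, in contrast to the integral companion identity $\tilde h_i\circ\tilde g_i = n$ available on the homology side via \cite[Proposition III.9.5 (ii)]{Brown}.
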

\begin{proof}
For the inductive limit part of the proof, we begin with the fact (see \cite{Scarparo} page 2544) that 
\[ C(\Omega) \rtimes \pi_1(Y) \cong \lim_{\rightarrow} C(\Omega_i) \rtimes \pi_1(Y) \]
where $\Omega_i=\pi_1(Y)/ g^i_*(\pi_1(Y))$ and the map in the inductive limit is obtained from the map $\Omega_{i+1} \rightarrow \Omega_i$ defined using $g^{i+1}_*(\pi_1(Y)) \subseteq g^i_*(\pi_1(Y))$. Furthermore, \cite[Proposition 2.3]{Scarparo}, implies that, for each $i$, 
\[ C(\Omega_i) \rtimes \pi_1(Y) \cong M_{n^i}(\C) \otimes C^*_r(\pi_1(Y)) \] 
where we have used the fact that $g$ is an $n$-fold cover and (for each $i$) $p^i_*(\pi_1(Y)) \cong \pi_1(Y)$. We have that
\[
K_*(C(\Omega_i) \rtimes \pi_1(Y)) \cong K_*(M_{n^i}(\C) \otimes C^*_r(\pi_1(Y))) \cong K_*(C^*_r(\pi_1(Y))) \cong K_*(Y)
\]
where in the last step we have used the fact that $\pi_1(Y)$ satisfies the Baum--Connes conjecture, $\pi_1(Y)$ is torsion-free, and $Y$ is a model for $B(\pi_1(Y))$.

For the second part of the proof, again by the Baum--Connes conjecture (now with coefficients), for each $i$, 
\[
K_*(C(\Omega_i) \rtimes \pi_1(Y)) \cong KK^{\pi_1(Y)}_*(C_0(\R^d), C(\Omega_i))
\]
and the connecting maps in the inductive limit are given by 
\[ (g_i)_* : KK^{\pi_1(Y)}_*(C_0(\R^d), C(\Omega_i)) \rightarrow KK^{\pi_1(Y)}_*(C_0(\R^d), C(\Omega_{i+1})) \]
where at the space level $g_i: \Omega_{i+1} \rightarrow \Omega_i$ is defined using $g^{i+1}(\pi_1(Y)) \subseteq g^i_*(\pi_1(Y))$. The map $g_i$ is a covering map and hence there is a transfer map 
\[ (g_i)! : KK^{\pi_1(Y)}_*(C_0(\R^d), C(\Omega_{i+1})) \rightarrow KK^{\pi_1(Y)}_*(C_0(\R^d), C(\Omega_i)). \]
One can then show that $(g_i)_*$ is a rational isomorphism directly (compare with the proof of \cite[Lemma 4.2 Part 2]{MR3073917} in the context of $K$-theory) or using the Chern character to relate the inductive limit in the present theorem with the one for homology in the previous section; the details are omitted. 
\end{proof}

\begin{remark}
With a bit more work, one can show that the inductive limits in both Theorems \ref{inductiveLimitHomology} and \ref{inductiveLimitKtheory} are stationary. However this is not needed for the results of the present paper. The fact that these inductive limits are stationary is similar to my previous work with Allan Yashinski in \cite{MR4138909} concerning the stable groupoid $C^*$-algebra of a Smale space with totally disconnected stable sets.
\end{remark}

\subsection{Main results}
\begin{theorem} \label{mainResultTorsion}
Suppose that $Y$ is a flat manifold. Then there exists an expanding endomorphism $g: Y \rightarrow Y$ such that 
\[
T(H_*(\Omega \rtimes \pi_1(Y))) \cong T(H_*(Y))
\]
\end{theorem}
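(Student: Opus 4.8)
The plan is to feed the inductive-limit description of the groupoid homology from Theorem \ref{inductiveLimitHomology} into the abstract torsion computation of Proposition \ref{inductiveLimitGenProp}. By Theorem \ref{inductiveLimitHomology}, for any choice of expanding endomorphism $g$ (say an $n$-fold cover) we have $H_*(\Omega \rtimes \pi_1(Y)) \cong \lim_{\rightarrow}(H_*(Y), \tilde{g}_i)$, where each connecting map $\tilde{g}_i$ admits $\tilde{h}_i$ with $\tilde{h}_i \circ \tilde{g}_i$ equal to multiplication by $n$. Taking $G = H_*(Y)$ (finitely generated since $Y$ is a closed manifold), $\beta_i = \tilde{g}_i$ and $\alpha_i = \tilde{h}_i$, Proposition \ref{inductiveLimitGenProp} would immediately yield $T(H_*(\Omega \rtimes \pi_1(Y))) \cong T(H_*(Y))$ via $\gamma \mapsto [\gamma,0]$, \emph{provided} its standing hypothesis that $n\gamma = \gamma$ for all $\gamma \in T(H_*(Y))$ can be arranged. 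By Proposition \ref{flatManTorsionHomology}, every torsion class in $H_*(Y)$ has order dividing $|F|$, so this hypothesis holds as soon as the degree $n$ satisfies $n \equiv 1 \pmod{|F|}$.

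Thus the whole theorem reduces to the following geometric input: $Y$ admits an expanding endomorphism whose degree lies in the residue class $1$ modulo $|F|$. I would produce such an endomorphism by a scalar (more generally affine) construction going back to Epstein and Shub \cite{EpSh}. Writing $\pi_1(Y)$ as a Bieberbach group inside $\R^d \rtimes O(d)$, with translation lattice $L \cong \Z^d$ and holonomy $F$, the scalar map $S_m : \R^d \to \R^d$, $x \mapsto mx$, conjugates an affine element $(A,a)$ to $(A, ma)$; hence $S_m\, \pi_1(Y)\, S_m^{-1} \subseteq \pi_1(Y)$ precisely when $(m-1)a \in L$ for each of the finitely many coset representatives $(A,a)$. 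Clearing the denominators of these fractional translations gives an integer $N$ such that every $m \equiv 1 \pmod{N}$ yields a well-defined self-cover $g = g_m$ of $Y$; for $m \ge 2$ this $g$ is expanding (with expansion constant $\lambda = m$), and its degree is $[\,L : mL\,] = m^d$.

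It then remains to choose $m$. Requiring $m \equiv 1 \pmod{\mathrm{lcm}(N,|F|)}$ and $m \ge 2$ (for instance $m = \mathrm{lcm}(N,|F|)+1$) makes $g_m$ an expanding endomorphism of degree $n = m^d \equiv 1 \pmod{|F|}$, as needed. With $g = g_m$ fixed, the hypothesis of Proposition \ref{inductiveLimitGenProp} is supplied by Proposition \ref{flatManTorsionHomology}, and the chain of isomorphisms $T(H_*(\Omega \rtimes \pi_1(Y))) \cong T(\lim_{\rightarrow}(H_*(Y), \tilde{g}_i)) \cong T(H_*(Y))$ completes the argument.

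The main obstacle is exactly this degree control. Note that one cannot in general succeed by iterating a single expanding endomorphism $g_0$ of degree $n_0$: the degrees so obtained are the powers $n_0^k$, and these meet the class $1 \bmod |F|$ only when $\gcd(n_0,|F|)=1$, which need not hold. The freedom to prescribe the degree modulo $|F|$ is precisely what the affine/scalar Epstein--Shub construction supplies, and checking that $S_m$ descends to $Y$ (the congruence $m \equiv 1 \pmod{N}$ together with the resulting degree computation $m^d$) is the technical heart of the proof.
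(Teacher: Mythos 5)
Your proposal is correct and follows essentially the same route as the paper: the paper likewise feeds the inductive limit of Theorem \ref{inductiveLimitHomology} into Proposition \ref{inductiveLimitGenProp}, with the hypothesis $n\gamma=\gamma$ on torsion supplied by Proposition \ref{flatManTorsionHomology}, and it obtains the degree control by citing the theorem on page 140 of \cite{EpSh}, which produces an expanding $g$ covered by multiplication by $m=|F|+1$ on the torus, so that $n=(|F|+1)^d\equiv 1 \pmod{|F|}$. The one soft spot in your re-derivation of the geometric input is the phrase ``clearing the denominators'': the translational parts of a Bieberbach group need not be rational until one first conjugates by a suitable translation (a standard averaging argument in $H^1(F;\R^d/L)$ puts them in $\frac{1}{|F|}L$), which is precisely the content of the Epstein--Shub theorem the paper invokes and which shows one may take $N=|F|$, recovering the paper's choice $m=|F|+1$.
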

\begin{proof}
By the main result of \cite{EpSh} (see the theorem on page 140 of \cite{EpSh}), there exists an expanding endomorphism $g: Y\rightarrow Y$ satisfying
\[ \begin{CD}
		(S^1)^d @>p >> Y \\
		@V\times m VV @Vg VV \\
		(S^1)^d @>p >> Y \\
\end{CD} \]
where 
\begin{enumerate}
\item $p$ is the cover of $Y$ by the torus discussed in Section \ref{secFlatMan} and
\item $\times m$ is the map multiplication by $m$ with $m=|F|+1$ ($F$ was also discussed in Section \ref{secFlatMan}). 
\end{enumerate}

It follows that $g$ is an $n$-fold cover with $n=m^d=(|F|+1)^d$. By Proposition \ref{flatManTorsionHomology} and the fact that the homology of $Y$ are finitely generated, we can apply Proposition \ref{inductiveLimitGenProp}. The result then follows from an application of Proposition \ref{inductiveLimitGenProp} to the inductive limit in Theorem \ref{inductiveLimitHomology}.
\end{proof}

\begin{theorem} \label{mainResultQ}
Suppose $Y$ is a flat manifold and $g: Y\rightarrow Y$ is an expanding endomorphism. Then 
\[
K_*(C(\Omega) \rtimes \pi_1(Y)) \otimes \Q \cong K_*(Y)\otimes \Q \hbox{ and }H_*(\Omega \rtimes \pi_1(Y)) \otimes \Q \cong H_*(Y) \otimes \Q.
\]
In particular, the rational HK-conjecture holds for $\Omega \rtimes \pi_1(Y)$.
\end{theorem}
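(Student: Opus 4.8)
The plan is to observe that Theorem \ref{mainResultQ} is essentially a direct corollary of the two inductive-limit computations already established, once one invokes the general fact that tensoring with $\Q$ turns each connecting map into an honest isomorphism. Concretely, both $K_*(C(\Omega)\rtimes\pi_1(Y))$ and $H_*(\Omega\rtimes\pi_1(Y))$ have been identified (Theorems \ref{inductiveLimitKtheory} and \ref{inductiveLimitHomology}) as inductive limits of the constant systems $\lim_{\rightarrow}(K_*(Y),\hat{g}_i)$ and $\lim_{\rightarrow}(H_*(Y),\tilde{g}_i)$, and in each case the connecting maps were shown to be \emph{rational isomorphisms}. So the whole statement reduces to a lemma about inductive limits of rational isomorphisms.

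First I would record the elementary fact that tensoring with $\Q$ is exact and commutes with (filtered) direct limits, so that
\[
\Big(\lim_{\rightarrow}(G,\beta_i)\Big)\otimes\Q \;\cong\; \lim_{\rightarrow}(G\otimes\Q,\;\beta_i\otimes\id_\Q).
\]
Next I would use that each $\beta_i$ (playing the role of $\hat{g}_i$ or $\tilde{g}_i$) is a rational isomorphism, i.e.\ $\beta_i\otimes\id_\Q$ is an isomorphism of $\Q$-vector spaces; in the homology case this is immediate from the relation $\tilde{h}_i\circ\tilde{g}_i=$ multiplication by $n$ (with $n\ge 1$), which becomes invertible over $\Q$, and in the $K$-theory case it is the ``moreover'' clause of Theorem \ref{inductiveLimitKtheory}. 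An inductive limit of isomorphisms is isomorphic to any one of its terms via the canonical map $\gamma\mapsto[\gamma,0]$; one checks this is injective (each connecting map injective) and surjective (any $[\tilde\gamma,k]$ equals $[\beta_{k-1}^{-1}\cdots\beta_0^{-1}\tilde\gamma,\,0]$ after pulling back through the now-invertible maps). Applying this with $G\otimes\Q$ in place of $G$ yields
\[
K_*(C(\Omega)\rtimes\pi_1(Y))\otimes\Q\cong K_*(Y)\otimes\Q,
\qquad
H_*(\Omega\rtimes\pi_1(Y))\otimes\Q\cong H_*(Y)\otimes\Q.
\]

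For the ``in particular'' clause I would then combine these two isomorphisms with the rational Chern character. Since $Y$ is a closed manifold (in particular a finite CW complex), the Chern character gives an isomorphism
\[
K_*(Y)\otimes\Q\;\cong\;\bigoplus_i H_{*+2i}(Y;\Q)\;\cong\;\Big(\bigoplus_i H_{*+2i}(Y)\Big)\otimes\Q,
\]
where I am using that rational $K$-homology is the direct sum of rational ordinary homology in the appropriate parities (the collapse of the rational Atiyah--Hirzebruch spectral sequence, already implicit in Item (5) of Section \ref{SecAHSS}). Chaining this with the two limit identifications above gives exactly $K_*(C(\Omega)\rtimes\pi_1(Y))\otimes\Q\cong\bigoplus_i H_{*+2i}(\Omega\rtimes\pi_1(Y))\otimes\Q$, which is the rational HK-conjecture for $\Omega\rtimes\pi_1(Y)$.

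The substantive content is entirely upstream, in Theorems \ref{inductiveLimitHomology} and \ref{inductiveLimitKtheory}; given those, the present proof is formal. The only point demanding a little care—the step I would flag as the ``main obstacle,'' such as it is—is matching the grading/parity bookkeeping between the $\Z$-graded $K$-theory and the $\bigoplus_i H_{*+2i}$ indexing, and confirming that the Chern character isomorphism for $Y$ transports correctly to the odometer groupoid through the inductive-limit identifications (i.e.\ that the limit-versus-Chern-character diagram commutes rationally). This is why Theorem \ref{inductiveLimitKtheory} was phrased with the rational Chern character relating the two systems; invoking that compatibility is what makes the final chain of isomorphisms legitimate rather than merely an abstract-dimension count.
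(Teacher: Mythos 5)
Your proposal is correct and follows essentially the same route as the paper: tensoring with $\Q$ commutes with inductive limits, the connecting maps $\hat{g}_i \otimes \id_\Q$ and $\tilde{g}_i \otimes \id_\Q$ become invertible by Theorems \ref{inductiveLimitKtheory} and \ref{inductiveLimitHomology}, and the rational Chern character on $Y$ yields the rational HK-conjecture. Your closing worry about diagram compatibility is unnecessary --- the conjecture only asks for an abstract isomorphism of groups, so chaining the three isomorphisms suffices, exactly as the paper does.
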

\begin{proof}
Since taking the tensor product with the rationals respects inductive limits, we can apply Theorem \ref{inductiveLimitKtheory} to obtain 
\[ K_*(C(\Omega) \rtimes \pi_1(Y)) \otimes \Q  \cong  \lim_{\rightarrow} ( K_*(Y)\otimes \Q , \hat{g}_i\otimes id_{\Q} ). \]
Furthermore, Theorem \ref{inductiveLimitKtheory} implies that $\hat{g}_i\otimes id_{\Q}$ is invertible. This completes the proof for $K$-theory. For homology, the proof is the same with the use of Theorem \ref{inductiveLimitKtheory} replaced by Theorem \ref{inductiveLimitHomology}.

Finally, the rational HK-conjecture holds because the Chern character (from the $K$-homology of $Y$ to the even/odd homology of $Y$) is a rational isomorphism.
\end{proof}

Based on Theorem \ref{mainResultTorsion} we have the following:
\begin{corollary} \label{mainCor} Suppose $Y$ is a flat manifold with 
\[ |T(K_*(Y))| < |\bigoplus_{i} T(H_{*+2i}(Y))|. \]
Then there exists an expanding endomorphism $g: Y\rightarrow Y$ such that the transformation groupoid associated to the odometer action of $\pi_1(Y)$ is a counterexample to the HK-conjecture. Moreover, the relevant groupoid is principal.
\end{corollary}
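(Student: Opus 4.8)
The plan is to assemble the results already in hand so that, for a carefully chosen endomorphism, the torsion subgroups of the two sides of the HK-conjecture have different cardinalities. First I would apply Proposition \ref{HKassumption}: for any expanding endomorphism $g$, the groupoid $\Omega \rtimes \pi_1(Y)$ is second countable, \'etale, principal, and minimal, and its unit space is the Cantor set (hence it is ample). Thus it satisfies every hypothesis of the HK-conjecture and is in particular principal; what remains is to pin down a single $g$ for which the isomorphism of the conjecture fails.

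The decisive point is an asymmetry between the two sides. For the homology side I would invoke Theorem \ref{mainResultTorsion}, which (via the Epstein--Shub construction) supplies an expanding endomorphism $g$ with $T(H_*(\Omega \rtimes \pi_1(Y))) \cong T(H_*(Y))$; summing over all homological degrees this yields the equality
\[ |\bigoplus_i T(H_{*+2i}(\Omega \rtimes \pi_1(Y)))| = |T(H_*(Y))| = |\bigoplus_i T(H_{*+2i}(Y))|. \]
For the $K$-theory side I would combine Theorem \ref{inductiveLimitKtheory}, identifying $K_*(C(\Omega) \rtimes \pi_1(Y))$ with $\lim_{\rightarrow}(K_*(Y), \hat{g}_i)$, with Proposition \ref{inductiveLimitGenPropTorsionIncluded} applied to the finitely generated abelian group $K_*(Y)$, which gives only an upper bound
\[ |T(K_*(C(\Omega) \rtimes \pi_1(Y)))| \le |T(K_*(Y))|. \]

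Chaining these with the standing hypothesis $|T(K_*(Y))| < |\bigoplus_i T(H_{*+2i}(Y))|$ produces
\[ |T(K_*(C(\Omega) \rtimes \pi_1(Y)))| < |\bigoplus_i T(H_{*+2i}(\Omega \rtimes \pi_1(Y)))|. \]
Since the order of the torsion subgroup of a finitely generated abelian group is an isomorphism invariant, this strict inequality rules out any isomorphism between $K_*(C^*_r(\Omega \rtimes \pi_1(Y)))$ and $\bigoplus_i H_{*+2i}(\Omega \rtimes \pi_1(Y))$, so the HK-conjecture fails for this (principal) groupoid, as desired.

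As for difficulty, the corollary is essentially a bookkeeping assembly: all the analytic and homotopy-theoretic work has already been done in the cited results. The one genuinely load-bearing observation is the asymmetry just described --- that one must use the \emph{exact} torsion-preservation of Theorem \ref{mainResultTorsion} on the homology side (an arbitrary expanding endomorphism need not preserve homological torsion, in which case the gap could close), while on the $K$-theory side the generic upper bound of Proposition \ref{inductiveLimitGenPropTorsionIncluded} already suffices. The only point requiring care is the identification $|\bigoplus_i T(H_{*+2i})| = |T(H_*)|$, namely that summing over all degrees recovers the total torsion.
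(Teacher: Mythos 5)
Your proposal is correct and follows essentially the same route as the paper: choose $g$ via Theorem \ref{mainResultTorsion}, invoke Proposition \ref{HKassumption} for the groupoid hypotheses and principality, bound the $K$-theoretic torsion by $|T(K_*(Y))|$ via Theorem \ref{inductiveLimitKtheory} and Proposition \ref{inductiveLimitGenPropTorsionIncluded}, and chain the inequalities to rule out the isomorphism. One cosmetic remark: the inductive limit groups need not be finitely generated, so the final step should appeal simply to the fact that any isomorphism of abelian groups restricts to an isomorphism of their (here finite) torsion subgroups, which is all your argument actually uses.
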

\begin{proof}
Take $g: Y \rightarrow Y$ as in Theorem \ref{mainResultTorsion}. By Proposition \ref{HKassumption}, the groupoid $\Omega \rtimes \pi_1(Y)$ satisfies the hypotheses of the HK-conjecture and is principal. Using Proposition \ref{inductiveLimitGenPropTorsionIncluded} and Theorem \ref{mainResultTorsion}, we have 
\[
|T(K_*(C^*(\Omega \rtimes \pi_1(Y)))| \le |T(K_*(Y))| < |T( \bigoplus_{i} (H_{*+2i}(Y)))| =  \left| T\left(\bigoplus_{i} H_{*+2i}(\Omega \rtimes \pi_1(Y))\right) \right|.
\]
In particular, $K_*(C^*(\Omega \rtimes \pi_1(Y))) \not\cong \bigoplus_{i} H_{*+2i}(\Omega \rtimes \pi_1(Y))$.
\end{proof}
The goal of the next section is the construction of a flat manifold satisfying the condition in the previous corollary. It is worth noting that if $Y$ satisfies
\[ T(K_*(Y)) \not\cong \bigoplus_{i} T(H_{*+2i}(Y)), \]
then ${\rm dim}(Y) \geq 4$, see \cite[Proposition 2.1 (ii)]{MR1951251}.

\section{The existence of the required flat manifold} \label{secExistence}
Our goal is the construction of a flat manifold satisfying the condition in Corollary \ref{mainCor}. In fact, the following will be proved:
\begin{theorem} \label{existDimNine}
For each $d\ge 9$, there exists a flat manifold $Y$ of dimension $d$ with the property that
\[ |T(K_*(Y))|< |\bigoplus_{i} T(H_{*+2i}(Y))|. \]
 Recall that for an abelian group $G$, $T(G)$ denotes its torsion subgroup.
 \end{theorem}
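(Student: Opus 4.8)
The plan is to deduce Theorem~\ref{existDimNine} from Proposition~\ref{propAboutNonEqualTorsion}. By that proposition it suffices to construct, for each $d\ge 9$, a closed orientable flat manifold $Y$ of dimension $d$ whose Stiefel--Whitney classes satisfy $w_3(Y)^2\neq 0$ in $H^6(Y;\Z/2\Z)$; the inequality $|T(K_*(Y))|<|\bigoplus_i T(H_{*+2i}(Y))|$ is then immediate. I would produce such $Y$ as real Bott manifolds (and products of these with tori), following \cite{MR3273875}.

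Recall the input I would use. To a strictly upper triangular matrix $A=(A_{ij})$ over $\Z/2\Z$ of size $n$ one associates a real Bott manifold $M(A)$, an iterated $\mathbb{RP}^1$-bundle which is a flat $n$-manifold. Writing $A^{(j)}=\sum_{i<j}A_{ij}x_i$ for the degree-one class of the line bundle used at the $j$-th stage, its mod-two cohomology ring is
\[ H^*(M(A);\Z/2\Z)\cong \Z/2\Z[x_1,\dots,x_n]\big/\big(x_j^2=x_j A^{(j)}\big), \]
with the squarefree monomials $x_{i_1}\cdots x_{i_k}$ forming a basis in each degree. Summing the vertical line bundles of the tower gives the total Stiefel--Whitney class
\[ w(M(A))=\prod_{j=1}^{n}\bigl(1+A^{(j)}\bigr). \]
From this I read off two things. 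First, $w_1(M(A))=\sum_i\bigl(\sum_j A_{ij}\bigr)x_i$, so $M(A)$ is orientable precisely when every row of $A$ has even sum. Second, $w_3(M(A))$ is the degree-three part of the product, namely the third elementary symmetric function $\sum_{j_1<j_2<j_3}A^{(j_1)}A^{(j_2)}A^{(j_3)}$, reduced to the squarefree basis using the relations.

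The heart of the argument, and the step I expect to be the main obstacle, is to exhibit a single strictly upper triangular $9\times 9$ matrix $A$ with all row sums even for which the resulting degree-six class $w_3(M(A))^2$ is nonzero. Concretely I would pick $A$ so that $w_3$ contains a controlled collection of squarefree cubic monomials, then compute $w_3^2$ by repeatedly applying $x_j^2=x_jA^{(j)}$ to rewrite it in the squarefree basis of $H^6$, and verify that the coefficient of some basis monomial survives. The bookkeeping is finite and combinatorial, but choosing $A$ so that the many cross terms do not cancel modulo the relations is delicate; this is where the orientability constraint (even row sums) must be balanced against the need for $w_3^2\neq 0$, and it is the reason the dimension cannot be taken too small.

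Finally I would pass from the nine-dimensional seed $Y_0=M(A)$ to every $d\ge 9$ by setting $Y=Y_0\times T^{\,d-9}$. A product of flat manifolds is flat, a product of orientable manifolds is orientable, and the torus is parallelizable, so by the Whitney product formula and the K\"unneth theorem $w_3(Y)=w_3(Y_0)\otimes 1$ and hence $w_3(Y)^2=w_3(Y_0)^2\otimes 1$, which is nonzero because $a\mapsto a\otimes 1$ is injective on cohomology. Applying Proposition~\ref{propAboutNonEqualTorsion} to each such $Y$ completes the construction.
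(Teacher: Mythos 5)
Your reduction is exactly the right one, and it coincides with the paper's: by Proposition \ref{propAboutNonEqualTorsion} it suffices to produce an orientable flat manifold with $w_3^2 \neq 0$; real Bott manifolds supply the candidates; the ring presentation of $H^*(M(A);\Z/2\Z)$, the formula $w(M(A))=\prod_j(1+A^{(j)})$, and the orientability criterion (every row sum of $A$ even) are all correct; and your passage from a nine-dimensional seed to all $d\ge 9$ via $Y_0\times T^{d-9}$ is legitimate --- indeed it is essentially one of the paper's two options, since padding $A$ with zero rows and columns produces exactly $M(A)\times T^{d-9}$. The problem is that the theorem is an existence statement, and your proposal stops short of the one step that carries its entire content: you never exhibit a strictly upper triangular $9\times 9$ matrix $A$ with even row sums for which $w_3(M(A))^2\neq 0$, and you yourself flag this as ``the main obstacle'' and ``delicate''. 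As written, the argument shows only that the theorem would follow if such an $A$ exists; it does not prove the theorem. This is a genuine gap, not a routine verification left to the reader, because it is not a priori clear that the orientability constraint is compatible with $w_3^2\neq 0$ in dimension nine --- the many cross terms could in principle cancel mod 2 for every admissible $A$.

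For comparison, the paper closes this gap with an explicit matrix: $A_{i,i+1}=1$ for $i=1,\dots,7$ and $A_{i,9}=1$ for $i=1,\dots,7$, all other entries zero. Then every row sum is even (so $w_1=0$), and in your notation $A^{(1)}=0$, $A^{(j+1)}=x_j$ for $j=1,\dots,7$, $A^{(9)}=x_1+\cdots+x_7$, giving relations $x_1^2=0$, $x_j^2=x_jx_{j-1}$ for $2\le j\le 8$, and $x_9^2=x_9(x_1+\cdots+x_7)$. The verification that $w_3^2\neq 0$ is then a finite count organized so that cancellation can be ruled out: the terms $y_iy_jy_k$ of $w_3$ with $k<9$ or $j<8$ reduce to expressions not involving $x_7$ in the relevant way, while among the terms $y_iy_8y_9$ exactly three cubic monomials square to the basis monomial $x_1x_2x_4x_5x_6x_7$, so its coefficient in $w_3^2$ is $1$ mod $2$. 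Your instinct about where the difficulty lies is accurate --- the construction works precisely because the column of ones concentrates the $x_7$-carrying contributions where they can be counted --- but without such an explicit matrix and computation (or some other existence argument), your proposal proves only a conditional statement.
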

Based on Proposition \ref{propAboutNonEqualTorsion} in Section \ref{SecAHSS}, (for each $d\ge 9$) we need only construct a flat manifold $Y$ (of dimension $d$) such that
\[ w_1(Y)=0 \hbox{ and }w_3(Y)^2 \neq 0
\]
Notice that $w_1(Y)=0$ implies that the $Y$ is orientable, which was a standing assumption in Section \ref{SecAHSS}.

I would like to recommend the reader review Section 2 and 3 from \cite{MR3273875} for an introduction to the important class of flat manifolds called real Bott manifolds. 

The case of $d=9$ is considered first. Using the notation of \cite{MR3273875} (see in particular, page 1017), let $Y(A)$ be the real Bott manifold associated to the matrix
\[ A = \left[ \begin{array}{ccccccccc}0 & 1 & 0 & 0 & 0 & 0 & 0 & 0 & 1 \\ 0 & 0 & 1 & 0 & 0 & 0 & 0 & 0  & 1 \\ 0 & 0 & 0 & 1 & 0 & 0 & 0 & 0 & 1 \\ 0 & 0 & 0 & 0 & 1 & 0 & 0 & 0 & 1 \\ 0 & 0 & 0 & 0 & 0 & 1 & 0 & 0 & 1 \\ 0 & 0 & 0 & 0 & 0 & 0 & 1 & 0 & 1 \\ 0 & 0 & 0 & 0 & 0 & 0 & 0 & 1 & 1  \\ 0 & 0 & 0 & 0 & 0 & 0 & 0 & 0 & 0  \\ 0 & 0 & 0 & 0 & 0 & 0 & 0 & 0 & 0  \end{array} \right] \]
It is worth noting that the dimension of $Y(A)$ is in fact $d=9$. The cohomology of $Y(A)$ with coefficients in $\Z/2\Z$ is determined by $A$. This was proved in \cite[Lemma 2.1]{MR2576506} and can also be found on page 1020 of \cite{MR3273875}. We have that  
\[
H^*(Y(A); \Z/2\Z) \cong (\Z/2\Z)[x_1, \ldots, x_d] / ( x_j^2=x_j \sum_{i=1}^dA_{ij}x_i \mid j=1, \ldots , d)
\]
For our specific choice of $A$, the relations are as follows:
\begin{equation} \label{relationX}
x_1^2=0, x_2^2=x_2x_1, \ldots  , x_8^2=x_8x_7, \hbox{ and } x_{9}^2=x_{9}(x_7+ \ldots +x_1)
\end{equation}
Moreover (again see page 1020 of \cite{MR3273875}) the classes $w_1(Y(A))$ and $w_3(Y(A))$ are given respectively by
\[
w_1(Y(A))=\sum_{i=1}^d y_i
\]
and
\begin{equation} \label{w3formula}
w_3(Y(A))=\sum_{1\le i < j < k\le d} y_iy_jy_k
\end{equation}
where, in general (see page 1017 of \cite{MR3273875})
\[ y_i = \sum_{k=1}^{i-1} A_{k, i} x_k.
\]
In our specific situation
\begin{equation} \label{relationY}
y_1=0, y_2=x_1, \ldots, y_8=x_7 \hbox{ and }y_{9}=x_1+ \ldots +x_7
\end{equation}

By either applying \cite[Lemma 2.2]{MR2576506} or direct computation, one checks that $w_1(Y(A))=0$ and hence $Y(A)$ is orientable.

To show that $w_3(Y(A))^2$ is non-zero is more involved. We will show that there is an odd number of terms of the form $x_1 x_2 x_4 x_5 x_6 x_7$ in the expression of $w_3(Y(A))^2$. To begin, we consider terms of the form $y_i y_j y_k$ where $1\le i < j < 8$. Collecting terms and using Equation (\ref{relationY}), we have
\[ y_i y_j ( y_{j+1} + \ldots y_9) = x_{i-1}x_{j-1}(x_{j+1}+\ldots x_{7} + x_1+\ldots +x_7)=x_{i-1}x_{j-1}(x_1 +\ldots + x_{j-1}) \]
where we have used the fact that $2 x_l=0$ since we are working in $H^*(Y(A); \Z/2\Z)$. Importantly for us, none of these terms contain an $x_7$.

This leaves terms of the form $y_i y_8 y_9$ where $i=2, \ldots , 7$ (where we have used the fact that $y_1=0$). Using Equation (\ref{relationY}) and the fact $x_7^2=x_7x_6$, we have that
\[ y_i y_8 y_9= x_{i-1}x_7(x_1 +x_2 + x_3 + x_4 + x_5). 
\]
Each of these terms can be further simplified using Equation (\ref{relationX}). Explicitly, for $i=3$, we have
\[ x_2x_7(x_1+x_2+x_3+x_4+x_5)=x_2x_7(x_3+x_4+x_5).
\]
A long but straightforward computation using the above considerations, Equation (\ref{w3formula}), and the relations given in Equations (\ref{relationX}) and (\ref{relationY}) shows that there are exactly three terms (they are $x_2x_5x_7$, $x_5x_7x_2$, and $x_6x_7x_2$) that square to $x_1x_2x_4 x_5 x_6x_7$. This is enough to conclude that $w_3(Y(A))^2 \neq 0$ as we are working in $H^*(Y(A); \Z/2\Z)$. As mentioned above, it follows from Proposition \ref{propAboutNonEqualTorsion} that  
\[ |T(K_*(Y)| < |\bigoplus_{i} T(H_{*+2i}(Y))|. \]
This completes the proof for $d=9$.

For $d>9$, there are a few options to generalize the construction used above. One can add zeros to the matrix $A$ above or again take a matrix with one's along the superdiagonal except for the last entry and in the last column except for the last two entries. Explicitly, for $d=10$, one can take
\[ \left[ \begin{array}{cccccccccc}0 & 1 & 0 & 0 & 0 & 0 & 0 & 0 & 1 & 0 \\ 0 & 0 & 1 & 0 & 0 & 0 & 0 & 0 & 1 & 0 \\ 0 & 0 & 0 & 1 & 0 & 0 & 0 & 0 & 1 & 0 \\ 0 & 0 & 0 & 0 & 1 & 0 & 0 & 0 & 1 & 0 \\ 0 & 0 & 0 & 0 & 0 & 1 & 0 & 0 & 1 & 0 \\ 0 & 0 & 0 & 0 & 0 & 0 & 1 & 0 & 1 & 0 \\ 0 & 0 & 0 & 0 & 0 & 0 & 0 & 1 & 1 & 0 \\ 0 & 0 & 0 & 0 & 0 & 0 & 0 & 0 & 0 & 0 \\ 0 & 0 & 0 & 0 & 0 & 0 & 0 & 0 & 0 & 0 \\ 0 & 0 & 0 & 0 & 0 & 0 & 0 & 0 & 0 & 0  \end{array} \right] \]
or
\[  \left[ \begin{array}{cccccccccc}0 & 1 & 0 & 0 & 0 & 0 & 0 & 0 & 0 & 1 \\ 0 & 0 & 1 & 0 & 0 & 0 & 0 & 0 & 0 & 1 \\ 0 & 0 & 0 & 1 & 0 & 0 & 0 & 0 & 0 & 1 \\ 0 & 0 & 0 & 0 & 1 & 0 & 0 & 0 & 0 & 1 \\ 0 & 0 & 0 & 0 & 0 & 1 & 0 & 0 & 0 & 1 \\ 0 & 0 & 0 & 0 & 0 & 0 & 1 & 0 & 0 & 1 \\ 0 & 0 & 0 & 0 & 0 & 0 & 0 & 1 & 0 & 1 \\ 0 & 0 & 0 & 0 & 0 & 0 & 0 & 0 & 1 & 1 \\ 0 & 0 & 0 & 0 & 0 & 0 & 0 & 0 & 0 & 0 \\ 0 & 0 & 0 & 0 & 0 & 0 & 0 & 0 & 0 & 0  \end{array} \right] \]
This completes the proof of Theorem \ref{existDimNine}.

As mentioned above, if $Y$ satisfies the conclusion of Theorem \ref{existDimNine}, then by Corollary \ref{mainCor} we have a counterexample to the HK-conjecture that is principal. Thus, we have counterexamples and can take the dimension of the relevant flat manifold to be any integer greater than or equal to nine.

%\end{thebibliography}

\end{document}